\newtheorem{lemma}{\textbf{Lemma}}
\newtheorem{theorem}{\textbf{Theorem}}
\title{Quickest Detection of Growing Dynamic Anomalies in Networks}
\name{Georgios Rovatsos$^{1}$, Venugopal V. Veeravalli$^{1}$, Don Towsley$^{2}$  and Ananthram Swami$^{3}$
\thanks{This work was sponsored in part by the Army Research Laboratory under Cooperative Agreement W911NF-17-2-0196 (IoBT CRA), and in part by the National Science Foundation (NSF) under grant CCF 16-18658.}
}
\address{Department of Electrical and Computer Engineering, University of Illinois at Urbana-Champaign$^{1}$\\
Department of Computer Science, University of Massachusetts Amherst$^{2}$\\
Army Research Lab$^{3}$\\
Email: \small{ \texttt{rovatso2,vvv@illinois.edu, towsley@cs.umass.edu, ananthram.swami.civ@mail.mil}}}
\begin{document}

\maketitle
\begin{abstract}
The problem of quickest growing dynamic anomaly detection in sensor networks is studied. Initially, the observations at the sensors, which are  sampled sequentially by the decision maker, are generated according to a pre-change distribution. At some unknown but deterministic time instant, a dynamic anomaly emerges in the network, affecting different sets of sensors as time progresses. The observations of the affected sensors are generated from a post-change distribution. It is assumed that the number of affected sensors increases with time, and that only the initial and the final size of the anomaly are known to the decision maker. The goal is to detect the emergence of the anomaly as quickly as possible while guaranteeing a sufficiently low frequency of false alarm (FA) events. This detection problem is posed as a stochastic optimization problem by using a delay metric that is based on the worst possible path of the anomaly. A detection rule is proposed that is asymptotically optimal as the mean time to false alarm goes to infinity. Finally, numerical results are provided to validate our theoretical analysis.
\end{abstract}

\begin{keywords}
Dynamic anomaly, worst-path approach, quickest change detection, transient dynamics.
\end{keywords}

\section{Introduction}
\label{sec:intro}

Quickest change detection (QCD) has been used to model a wide range of applications in which achieving an accurate real-time estimate of the state of the monitored system is crucial to guarantee its reliability \cite{Rovatsos:2016}--\nocite{Rovatsos2:2016}\nocite{SIM:SIM2032}\nocite{Frisen:2009}\cite{4698342}.
The goal in QCD is to detect a change in the distribution of sequentially observed processes as quickly as possible, subject to false alarm constraints. In the classical QCD problem \cite{veer-bane-elsevierbook-2013,poor-hadj-qcd-book-2009} two standard formulations are used i) the \textit{minimax} setting \cite{lorden:1971}--\nocite{Pollak:1985}\cite{moustakides:1986}, where the changepoint is considered to be deterministic but unknown and the goal is to minimize a worst-case average detection delay subject to a lower bound on the mean time to false alarm; and ii) the \textit{Bayesian} setting \cite{Shiryaev:1963,tart_veera:2005}, where the changepoint is modeled as a random variable with a known distribution, and the goal is to minimize the average detection delay subject to a bound on the probability of false alarm. 

QCD related problems in the context of sensor networks have been extensively studied in the literature \cite{tartakovsky2004change}--\nocite{1677904}\nocite{mei2010efficient}\nocite{7890469}\nocite{xie2013sequential}\nocite{fellouris2016se}\nocite{hadjiliadis2009one}\nocite{raghavan2010quickest}\nocite{Ludkovski2012BayesianQD}\cite{Zou_sens:2018}. Research has been conducted for the case that an unknown subset of sensors switches to the post-change mode, with the distribution change being persistent at these sensors. In \cite{1677904}, an asymptotically optimal procedure was proposed for the case of one sensor being affected persistently after the changepoint. In \cite{mei2010efficient}--\nocite{xie2013sequential}\cite{fellouris2016se}, asymptotically optimal procedures were derived for the case that the anomaly affects an unknown subset of sensor network nodes. In \cite{Zou_sens:2018}, the problem of detecting an anomaly as quickly as possible after it affects more than a pre-defined number of nodes was studied. 

Note that all the works on sensor network detection problems mentioned above have a common element: there is a persistent change in the distribution of each affected sensor after it perceives the anomaly. In this work, we consider the problem of detecting anomalies that may affect different sets of sensors at different time instants, i.e., the sensors may alternate between the pre-change and the post-change mode. We focus on the case of a dynamic anomaly that grows in size, affecting a larger number of sensors as time progresses before achieving its final size. We study this QCD problem under Lorden's minimax framework \cite{lorden:1971}. We assume that the locations of the anomalous nodes are unknown and deterministic, and therefore we modify Lorden's delay metric to consider the worst-path of the anomaly with respect to detection delay. Note that, each interval during which the size of the anomaly is stable, but not equal to its final size, can be considered as a transient phase in the sense of \cite{Zou2017}\nocite{7953065}--\cite{Moustakides:2016}. The main difference between our work and prior studies in the literature on transient QCD is that in our case the locations of the anomalous nodes are not known, and hence the distribution of the observed data is not completely specified after the change. However, by connecting our problem setting with the results in \cite{Zou2017}, we propose a recursive algorithm that we show is asymptotically optimal as the mean time of false alarm (MTFA) goes to infinity.
\vspace{-0.2cm}
\section{Problem Model}
\label{sec:Problem_model}
\vspace{-0.2cm}
Consider a network of $L$ nodes denoted by $[L] \triangleq \{1,\dots,L\}$. Let $\bm{X}[k] = [X_{1}[k],\dots,X_{L}[k]]^\top$ denote the vector comprised of the observations obtained by the network at time $k$, where $X_{\ell}[k]$ denotes the measurement observed by node $\ell \in [L]$ at time $k$. At some \textit{deterministic} but \textit{unknown} time $\nu_1$, a dynamic anomaly appears in the network, initially affecting a set of $m$ nodes, where $m$ is known to the decision maker and the set of affected nodes changes with time. At time $\nu_2 \geq \nu_1$ the anomaly grows to a size of $m +1$ while still being dynamic. This process continues until $\nu_{n -m +1}$ after which the anomaly affects a set of $n$ nodes. As a result, after the pre-change phase we have a sequence of $n -m +1$ phases, where phase $i$ begins at an unknown starting point $\nu_i$ and during this phase an anomaly of size $m+i -1$ moves around the network. Phases $1,\dots,n -m$ can be thought of as transient phases in the sense of \cite{Zou2017}, since the anomaly may grow larger during these phases. Phase $n-m +1$ corresponds to the persistent phase, since in this phase the anomaly settles at its maximum size. 

We assume that the observations are independent across time, conditioned on the values of the changepoints $\nu_i$, $1 \leq i \leq n -m +1$. Furthermore, we assume that the the components of $\bm{X}[k]$ are independent conditioned on the locations of the anomalous nodes at time $k$. For $1 \leq i \leq n -m +1$, define by $\bm{S}^{(i)}[k]$ the location of the anomalous nodes at phase $i$ and time $k$. Then, denote by $\bm{S}^{(i)}=\{\bm{S}^{(i)}[k]\}_{k=1}^\infty$ the unknown but deterministic trajectory of the anomaly at phase $i$. Here, for notational convenience we consider the sequence $\bm{S}^{(i)}$ for all $k\geq 1$, even though the values of the sequence outside phase $i$ do not play a role in the distribution of the observed process. Define by $g(\cdot)$ and $f(\cdot)$ the pre- and post-change pdfs. In this work, we consider the case of \textit{homogeneous} sensors, i.e., the pre- and post-change pdfs are assumed to be the same across sensors. Then, for a fixed set of trajectory sequences $\bm{S} = \{\bm{S}^{(i)}\}_{i=1}^{n -m+1}$ and fixed changepoints $\{\nu_i\}_{i=1}^{ n -m +1}$ we have that for $1\leq k <\nu_1$
\begin{align}
\label{eq:growing_stat_model_1}
\bm{X}[k] \sim g(\bm{X}[k]) \triangleq  \prod\limits_{\ell=1}^L g(X_{\ell}[k])
\end{align}
 and for $\nu_i \leq k < \nu_{i+1} $ (where $\nu_{n - m +2} \triangleq \infty$) we have that
\begin{align}
 \label{eq:growing_stat_model_2}
\bm{X}[k]  \sim  \nonumber
  p_{\bm{S}^{(i)}[k]}(\bm{X}[k])& \triangleq \left( \prod\limits_{\ell \in \bm{S}^{(i)}[k]} f(X_{\ell}[k])\right) \\& \,  \times\left(\prod\limits_{\ell \notin \bm{S}^{(i)}[k]} g(X_{\ell}[k])\right). 
\end{align}
The duration of the $i$-th transient phase is denoted by $d_i \triangleq \nu_{i+1} - \nu_i$ for $ 1 \leq i \leq n -m$. Note that we assume that in addition to the changepoints, the durations of the transient phases are also deterministic but unknown.

Define by $\mathbb{E}_\infty[\cdot]$ the expectation when no anomaly is present. To quantify the frequency of false alarm (FA) events we use the mean time to false alarm (MTFA) denoted by $\mathbb{E}_\infty[\tau]$ for stopping time $\tau$. Furthermore, we use a detection delay metric based on a modification of Lorden's delay \cite{lorden:1971} to account for the fact that the locations of the anomalous nodes are unknown. In particular, define by $\mathcal{F}_k = \sigma(\bm{X}[1],\dots,\bm{X}[k])$ the $\sigma$-algebra generated by $\bm{X}[1],\dots,\bm{X}[k]$. Also, denote by $\mathbb{E}^{\bm{S}}_{\nu,\bm{d}}[\cdot]$ the expectation when $\nu_1 =\nu$, the transient durations are specified by the vector $\bm{d} = [ d_1 ,\dots,d_{n - m}]^\top$, and the trajectory of the anomaly is completely specified by the sequences $\bm{S} = \{\bm{S}^{(i)}\}_{i=1}^{n -m+1}$. To evaluate our detection schemes, we use the following delay metric:
\begin{align}
\label{eq:WADD_growing}
\mathrm{WADD}_{\bm{d}}(\tau) \hspace{-1.1pt} =  \hspace{-0.5pt} \sup\limits_{\bm{S}} \sup_{\nu \geq 1} \mathrm{ess}\hspace{-0.6pt} \sup \hspace{-0.1pt}\mathbb{E}_{\nu,\bm{d}}^{\bm{S}}[\tau - \nu +1 | \mathcal{F}_{\nu -1}, \hspace{-0.2pt}\tau \geq \nu]
\end{align}
where we use the convention that $\mathbb{E}_{\nu,\bm{d}}^{\bm{S}}[\tau - \nu +1 | \mathcal{F}_{\nu -1}, \tau \geq \nu] \triangleq 1$ when $\mathbb{P}_{\nu,\bm{d}}^{\bm{S}}\left(\tau \geq \nu\right)=0$. For $\gamma > 0$, define $\mathcal{C}_\gamma = \{ \tau : \mathbb{E}_\infty[\tau] \geq \gamma\}$. We are interested in solving the following stochastic optimization problem:
\begin{equation}
\label{eq:optimization_2}
\begin{aligned}
& \underset{\tau}{\text{min}}
& & \mathrm{WADD}_{\bm{d}}(\tau) \\
& \text{s.t.}
& & \tau \in \mathcal{C_\gamma}.
\end{aligned}
\end{equation}

Another observation model that will be important for our theoretical analysis is the statistical model which arises when the anomalous nodes at each time instant are chosen uniformly at random from the set of all combinations of size $m +i-1$ in $[L]$, which we denote by $\text{comb}([L],m+i-1)$. According to this model, we have that the observations before $\nu_1$ will be generated according to \eqref{eq:growing_stat_model_1}. After $\nu_1$, the data follow a transient QCD model \cite{Zou2017} where the distribution at each phase is a mixture of distributions, i.e., for $\nu_i \leq k < \nu_{i+1} $ where $1\leq i \leq n - m +1$ we have that
 \begin{align}
 \label{eq:random_stat_model_2}
\bm{X}[k] \sim 
  \overline{p}^{(i)}(\bm{X}[k])\triangleq \sum\limits_{\bm{A} \,\in\, \text{comb}([L],m+i-1)} \frac{p_{\bm{A}}(\bm{X}[k])}{\binom{L}{m+i-1}} .
\end{align}
Then, the Kullback-Liebler (KL) divergence number between the distribution corresponding to phase $i$ and the pre-chage distribution \cite{Zou2017} is given by

\begin{align}
I_i \triangleq \mathbb{E}_{\overline{p}^{(m)}} \left[\log \frac{\overline{p}^{(m)}(\bm{X})}{g(\bm{X})} \right] 
\end{align}
for $1 \leq i \leq n -m +1$. Furthermore, denote by $\overline{\mathbb{E}}_{\nu,\bm{d}}[\cdot]$ the expectation under the statistical model of \eqref{eq:growing_stat_model_1}, \eqref{eq:random_stat_model_2} when the change occurs at time $\nu$ and the transient durations are given by vector $\bm{d}$. Then, the detection delay for the QCD problem characterized by this statistical model is given by
\begin{align}
\label{eq:growing_random_delay}
\overline{\mathrm{WADD}}_{\bm{d}}(\tau) =  \sup_{\nu \geq 1} \mathrm{ess}\hspace{-0.6pt} \sup \overline{\mathbb{E}}_{\nu,\bm{d}}[\tau - \nu +1 | \mathcal{F}_{\nu -1},\vspace{-0.2pt} \tau \geq \nu].
\end{align}

\vspace{-0.2cm}
\section{Mixture-WD-CuSum Algorithm}
\label{sec:algorithms}
\vspace{-0.2cm}

Our proposed detection scheme is based on exploiting the symmetry of the observation model to modify the WD-CuSum algorithm proposed in \cite{Zou2017}. In particular, consider the following test statistic:
\begin{align}
\label{eq:stat_WD}
W[k] = \max\{\Omega^{(1)}[k],\dots,\Omega^{(n -m +1)}[k],0 \},
\end{align}
where for $1\ \leq i \leq n -m +1$, $\Omega^{(i)}$ is calculated as follows:
\begin{align}
\label{eq:stat_WD_2}
\Omega^{(i)}[k] &= \max\limits_{0 \leq j \leq i} \left( \Omega^{(j)}[k-1]+ \sum\limits_{r=j}^{i-1}  \log \rho_r \right) \nonumber \\&+ \log\frac{\overline{p}^{(i)}(\bm{X}[k])}{g(\bm{X}[k])} +\log(1-\rho_i) \
\end{align}
where $\rho_0 \triangleq 1$, $\rho_i \in (0,1)$ for $1 \leq i \leq n -m $, $\rho_{n -m +1}\triangleq 0$, $\Omega^{(i)}[0]\triangleq0$ for all $1 \leq i \leq n -m +1$, and $\Omega^{(0)}[k]\triangleq 0$ for all $k$. Furthermore, define the corresponding stopping time by
\begin{align}
\label{eq:stop_WD}
\tau_{W} = \inf\{ k\geq 1 : W[k] \geq b\}.
\end{align}
Note that to construct the algorithm a mixture approach with respect to $\bm{d}$ with a specific choice of weights that guarantee a recursive test structure is employed. The $\rho_i$ parameters arise from this choice of mixture weights (see \cite{Zou2017} for more details). In Sec. \ref{sec:Main_results}, we will show how to choose the $\rho_i$ parameters to guarantee that our proposed test is asymptotically optimal. Note that our mixture-WD-CuSum scheme is essentially the WD-CuSum algorithm \cite{Zou2017} that detects a transition from the joint distribution in \eqref{eq:growing_stat_model_1} to the mixture of distributions in \eqref{eq:random_stat_model_2}. In the next section we establish that this detection scheme is first-order asymptotically optimal with respect to \eqref{eq:optimization_2}.
\vspace{-0.2cm}
\section{Main Results}
\label{sec:Main_results}
\vspace{-0.2cm}
In this section, we present the asymptotic optimality of the detection scheme presented in \eqref{eq:stat_WD} - \eqref{eq:stop_WD} along with a sketch of the proof. For more details on the analysis see \cite{Rovatsos_sens:2019}. Define by $\Gamma_{\bm{S}}(k,\nu,\bm{d})$ the likelihood ratio at time $k$ between the hypothesis that the dynamic anomaly appears at time $\nu$ and grows with the trajectory of the anomalous nodes being specified by $\bm{S}$, $\bm{d}$, and the hypothesis that the anomaly never appears. Furthermore, define by $\mathcal{L}(k,\nu,\bm{d})$ the likelihood ratio at time $k$ between the hypothesis that the dynamic anomaly appears at time $\nu$ and at each phase $i$ (of duration $d_i$) the anomalous nodes are chosen uniformly at random (see eq. \eqref{eq:random_stat_model_2}), and the hypothesis that the anomaly never appears.

To ensure that the transient phases play a non-trivial role asymptotically, we let the durations of the transient periods go to infinity with $\gamma$. In particular, without loss of generality, we assume that there exist constants $c_i \in [0,\infty) \cup \{\infty\}, 1\leq i \leq n -m$ and $c_{n - m+1} \triangleq \infty$ such that as $\gamma \rightarrow \infty$

\begin{align}
\label{eq:d_scaling_general}
d_i \sim c_i \frac{\log\gamma}{I_{m+i-1}}.
\end{align}
This assumption can be intuitively explained since asymptotically the rate of the transient durations with respect to $\log \gamma$ will indicate the phase at which the anomaly will be detected (\cite{Zou2017}). To establish the asymptotic optimality of our proposed detection procedure, we connect the worst-path QCD problem defined in eqs. \eqref{eq:growing_stat_model_1} - \eqref{eq:optimization_2} with the transient QCD problem defined in \eqref{eq:growing_stat_model_1}, \eqref{eq:random_stat_model_2}, \eqref{eq:growing_random_delay} and use the theoretical results of \cite{Zou2017}. We start by proving a lemma that connects $\mathrm{WADD}_{\bm{d}}$ with $\overline{\mathrm{WADD}}_{\bm{d}}$ for an arbitrary stopping time $\tau$.
\begin{lemma}
\label{thm:lemma_1}
For any stopping rule $\tau$ and any $\bm{d}$ we have that
\begin{align}
\label{eq:lemma_1_eq}
\mathrm{WADD}_{\bm{d}}(\tau) \geq \overline{\mathrm{WADD}}_{\bm{d}}(\tau).
\end{align}
\end{lemma}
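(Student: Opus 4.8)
\noindent\emph{Proof plan.} The plan is to exploit that the randomized-location model $\overline{\mathbb{P}}_{\nu,\bm d}$ is simply an \emph{average} of the deterministic-trajectory models $\{\mathbb{P}_{\nu,\bm d}^{\bm S}\}_{\bm S}$, so that the worst-case-over-$\bm S$ conditional delay inside $\mathrm{WADD}_{\bm d}(\tau)$ automatically dominates the averaged conditional delay inside $\overline{\mathrm{WADD}}_{\bm d}(\tau)$. The one thing to be careful about is pushing this ``average $\le$ worst case'' comparison through the conditioning on $\mathcal{F}_{\nu-1}$ and the essential supremum; this goes through because $\{\tau\ge\nu\}\in\mathcal{F}_{\nu-1}$ and because every model has the same pre-change law on $\mathcal{F}_{\nu-1}$.

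\noindent\emph{Step 1: the mixture identity.} I would first make precise that $\overline{\mathbb{P}}_{\nu,\bm d}$ is a mixture. Let $Q$ be the probability measure on the trajectory space $\{\bm S^{(i)}[k]\}_{i,k}$ (a countable product of the finite sets $\text{comb}([L],m+i-1)$) under which all the $\bm S^{(i)}[k]$ are independent with $\bm S^{(i)}[k]$ uniform on $\text{comb}([L],m+i-1)$; then $\bm S\mapsto\mathbb{P}_{\nu,\bm d}^{\bm S}$ is measurable and $\int\mathbb{P}_{\nu,\bm d}^{\bm S}(\cdot)\,dQ(\bm S)$ is a well-defined probability measure. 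Comparing \eqref{eq:growing_stat_model_1}--\eqref{eq:growing_stat_model_2} with \eqref{eq:random_stat_model_2} and using conditional independence across time, the time-$k$ marginal of this mixture is $g$ for $k<\nu$ and the coordinatewise mixture $\overline p^{(i)}$ in phase $i$, with time-independence preserved; hence
\[
\overline{\mathbb{P}}_{\nu,\bm d}(\cdot)=\int\mathbb{P}_{\nu,\bm d}^{\bm S}(\cdot)\,dQ(\bm S),
\]
and, by Tonelli, the same identity holds for $\overline{\mathbb{E}}_{\nu,\bm d}$ applied to any nonnegative functional.

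\noindent\emph{Step 2: the conditional bound and conclusion.} Fix $\nu\ge 1$ and $\bm d$ and set $V\triangleq\mathrm{WADD}_{\bm d}(\tau)$; if $V=\infty$ there is nothing to prove, so assume $V<\infty$. For $E\in\mathcal{F}_{\nu-1}$ write $E'\triangleq E\cap\{\tau\ge\nu\}$, which is in $\mathcal{F}_{\nu-1}$ since $\{\tau\le\nu-1\}\in\mathcal{F}_{\nu-1}$. For each fixed $\bm S$, the definition of $\mathrm{WADD}_{\bm d}$ gives (a version of) $\mathbb{E}_{\nu,\bm d}^{\bm S}[\tau-\nu+1\mid\mathcal{F}_{\nu-1}]$ that is $\le V$ on $\{\tau\ge\nu\}$, $\mathbb{P}_{\nu,\bm d}^{\bm S}$-a.s.\ (vacuously, by the stated convention, when $\mathbb{P}_{\nu,\bm d}^{\bm S}(\tau\ge\nu)=0$); hence, by the tower property together with $E'\subseteq\{\tau\ge\nu\}$,
\[
\mathbb{E}_{\nu,\bm d}^{\bm S}\!\big[(\tau-\nu+1)\mathbbm{1}_{E'}\big]=\mathbb{E}_{\nu,\bm d}^{\bm S}\!\big[\mathbbm{1}_{E'}\,\mathbb{E}_{\nu,\bm d}^{\bm S}[\tau-\nu+1\mid\mathcal{F}_{\nu-1}]\big]\le V\,\mathbb{P}_{\nu,\bm d}^{\bm S}(E').
\]
Since $E'\in\mathcal{F}_{\nu-1}$ and all models share the pre-change law, $\mathbb{P}_{\nu,\bm d}^{\bm S}(E')=\mathbb{P}_\infty(E')=\overline{\mathbb{P}}_{\nu,\bm d}(E')$. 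Integrating the last display against $Q$ and using Step 1 gives $\overline{\mathbb{E}}_{\nu,\bm d}[(\tau-\nu+1)\mathbbm{1}_{\tau\ge\nu}\mathbbm{1}_E]\le V\,\overline{\mathbb{E}}_{\nu,\bm d}[\mathbbm{1}_{\tau\ge\nu}\mathbbm{1}_E]$ for all $E\in\mathcal{F}_{\nu-1}$; since $\{\tau\ge\nu\}\in\mathcal{F}_{\nu-1}$, this forces $\overline{\mathbb{E}}_{\nu,\bm d}[\tau-\nu+1\mid\mathcal{F}_{\nu-1}]\le V$ on $\{\tau\ge\nu\}$, $\overline{\mathbb{P}}_{\nu,\bm d}$-a.s., i.e.\ $\mathrm{ess\,sup}\,\overline{\mathbb{E}}_{\nu,\bm d}[\tau-\nu+1\mid\mathcal{F}_{\nu-1},\tau\ge\nu]\le V$. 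Taking the supremum over $\nu\ge 1$ yields $\overline{\mathrm{WADD}}_{\bm d}(\tau)\le V=\mathrm{WADD}_{\bm d}(\tau)$. (The degenerate conventions do not interfere since $V\ge 1$.)

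\noindent\emph{Expected main obstacle.} I expect nothing here to be deep; the crux is Step 1 --- verifying that $\overline{\mathbb{P}}_{\nu,\bm d}$ really is the $Q$-mixture of the $\{\mathbb{P}_{\nu,\bm d}^{\bm S}\}$, which genuinely uses the across-time conditional-independence assumption so that the mixture's time-marginals collapse exactly to the $\overline p^{(i)}$ --- together with the bookkeeping in Step 2 that lets the per-$\bm S$ essential-supremum bound survive the averaging: this hinges precisely on $\{\tau\ge\nu\}$ being $\mathcal{F}_{\nu-1}$-measurable and on the common pre-change law, so that the normalizing probabilities $\mathbb{P}_{\nu,\bm d}^{\bm S}(E')$ coincide across $\bm S$ and equal $\overline{\mathbb{P}}_{\nu,\bm d}(E')$. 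The measurability of $\bm S\mapsto\mathbb{P}_{\nu,\bm d}^{\bm S}$ and the Tonelli interchange are routine because the trajectory space is a countable product of finite sets.
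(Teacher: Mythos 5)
Your proof is correct, and it takes a genuinely different route from the paper's. The paper proceeds by (i) truncating $\tau$ to $\tau^{(N)}=\min\{\tau,N\}$ and showing the delay can only decrease, (ii) performing a change of measure to write the conditional delay under $\mathbb{P}_{\nu,\bm d}^{\bm S}$ as an $\mathbb{E}_\infty$-expectation of a sum weighted by the likelihood ratio $\Gamma_{\bm S}(i,\nu,\bm d)$, (iii) replacing $\Gamma_{\bm S}$ by $\mathcal L$ via a \emph{time-step-by-time-step} induction that, at each step, bounds the supremum over the trajectory coordinate at that step by its uniform average, (iv) reversing the change of measure to obtain $\overline{\mathbb E}_{\nu,\bm d}$, and (v) letting $N\to\infty$ by monotone convergence. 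You instead observe up front that $\overline{\mathbb P}_{\nu,\bm d}=\int \mathbb P_{\nu,\bm d}^{\bm S}\,dQ(\bm S)$ for the uniform product prior $Q$ on trajectories (a single global mixture identity, verified via Tonelli and conditional independence), and then push the pointwise bound $\mathbb E_{\nu,\bm d}^{\bm S}[\tau-\nu+1\mid\mathcal F_{\nu-1}]\le V$ on $\{\tau\ge\nu\}$ through the $Q$-average using only the tower property, the $\mathcal F_{\nu-1}$-measurability of $\{\tau\ge\nu\}$, and the fact that $\mathbb P_{\nu,\bm d}^{\bm S}$, $\overline{\mathbb P}_{\nu,\bm d}$, and $\mathbb P_\infty$ all agree on $\mathcal F_{\nu-1}$. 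The underlying mathematical content (``worst case dominates a uniform mixture'') is the same, but your version collapses the paper's truncation, explicit likelihood-ratio change-of-measure, iterative induction over trajectory coordinates, and monotone-convergence passage into a single clean conditional-expectation argument, and it avoids any per-time-step bookkeeping; this is a cleaner proof and also makes transparent why the inequality is really about domination by an arbitrary mixing prior, not just the uniform one. One very minor slip in your Step 1: the trajectory space is a \emph{countably indexed} product of finite sets (hence uncountable, like $\{0,1\}^{\mathbb N}$), not a ``countable product''; this does not affect the argument since measurability of $\bm S\mapsto\mathbb P_{\nu,\bm d}^{\bm S}(A)$ still holds because, for $A\in\mathcal F_k$, the map depends on only finitely many coordinates of $\bm S$, and general $A$ is handled by the usual $\pi$--$\lambda$/MCT bootstrap.
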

\begin{proof}[Proof sketch]
The proof begins by establishing that for any $\tau$ and $\tau^{(N)} \triangleq \min\{\tau,N\}$, where $N >0$, we have that
\begin{align}
\mathrm{WADD}_{\bm{d}}(\tau)  \geq \mathrm{WADD}_{\bm{d}}(\tau^{(N)}).
\end{align}
Then, by a change of measure argument we show that 
\begin{align}
&\mathrm{WADD}_{\bm{d}}(\tau^{(N)}) \geq \sup_{\bm{S}} \mathbb{E}_{\nu,\bm{d}}^{\bm{S}}[\tau^{(N)} - \nu +1 | \mathcal{F}_{\nu -1}, \tau^{(N)} \geq \nu] \nonumber\\& = \mathbb{E}_{\infty}\left[  \mathbbm{1}_{\{\tau^{(N)} \geq \nu\}}  \bigg| \mathcal{F}_{\nu -1}, \tau^{(N)} \geq \nu\right]\nonumber \\&   + \sup_{\bm{S}}\mathbb{E}_{\infty}\left[ \sum\limits_{i =\nu}^{N -1}  \Gamma_{\bm{S}} \left(i,\nu,\bm{d}\right)   \mathbbm{1}_{\{i < \tau^{(N)} \}}  \bigg| \mathcal{F}_{\nu -1}, \tau^{(N)} \geq \nu\right],
\end{align}
for any $\nu$, $\bm{d}$. Then, by using the fact that the supremum of a set of numbers is lower bounded by the average, and by using another change of measure argument we can establish that
\begin{align}
\label{eq:ineq_1}
&\mathrm{WADD}_{\bm{d}}(\tau^{(N)})  \geq \mathbb{E}_{\infty}\left[  \mathbbm{1}_{\{\tau^{(N)} \geq \nu\}}  \bigg| \mathcal{F}_{\nu -1}, \tau^{(N)} \geq \nu\right]\nonumber \\&   + \mathbb{E}_{\infty}\left[ \sum\limits_{i =\nu}^{N -1}  \mathcal{L} \left(i,\nu,\bm{d}\right)   \mathbbm{1}_{\{i < \tau^{(N)} \}}  \bigg| \mathcal{F}_{\nu -1}, \tau^{(N)} \geq \nu\right] \nonumber \\&  =\overline{\mathbb{E}}_{\nu,\bm{d}}[\tau^{(N)} - \nu +1 | \mathcal{F}_{\nu -1}, \tau^{(N)} \geq \nu].
\end{align}
The lemma is then established from the Monotone Convergence Theorem by taking the limit as $N \rightarrow \infty$, and using the fact that inequality \eqref{eq:ineq_1} holds for all $\nu$, $\bm{d}$.
\end{proof}
Note that, by using Lemma \ref{thm:lemma_1} and the universal asymptotic lower bound for transient QCD \cite{Zou2017} we can derive a universal asymptotic lower bound on the worst-path delay $\mathrm{WADD}_{\bm{d}}$. In view of Lemma \ref{thm:lemma_1}, and since the stopping rule presented in \eqref{eq:stat_WD} - \eqref{eq:stop_WD} solves the transient QCD problem specified by eqs. \eqref{eq:growing_stat_model_1}, \eqref{eq:random_stat_model_2}, \eqref{eq:growing_random_delay} asymptotically, the asymptotic optimality of the mixture-WD-CuSum test will be established if we can show that equality in \eqref{eq:lemma_1_eq} is attained when $\tau = \tau_W$. In particular, we have the following lemma:
\begin{lemma}
\label{thm:lemma_2}
For the stopping rule defined in \eqref{eq:stat_WD} - \eqref{eq:stop_WD} and for any $\bm{d}$ we have that
\begin{align}
\mathrm{WADD}_{\bm{d}}(\tau_W) = \overline{\mathrm{WADD}}_{\bm{d}}(\tau_W).
\end{align}
\end{lemma}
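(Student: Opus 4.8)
Since Lemma~\ref{thm:lemma_1} already gives $\mathrm{WADD}_{\bm{d}}(\tau_W) \ge \overline{\mathrm{WADD}}_{\bm{d}}(\tau_W)$, it remains to prove the reverse inequality, and the plan is in fact to obtain a pointwise equality of the conditional delays before taking suprema. The driving observation is that the statistic $W[\cdot]$ of \eqref{eq:stat_WD}--\eqref{eq:stat_WD_2}, and hence the stopping time $\tau_W$, depends on the observations only through the \emph{permutation-invariant} statistics $T^{(i)}[k] \triangleq \overline{p}^{(i)}(\bm{X}[k])/g(\bm{X}[k])$, $1 \le i \le n-m+1$: from \eqref{eq:growing_stat_model_1}--\eqref{eq:random_stat_model_2} one has $T^{(i)}[k] = \binom{L}{m+i-1}^{-1}\sum_{\bm{A}\in\text{comb}([L],m+i-1)}\prod_{\ell\in\bm{A}}(f(X_\ell[k])/g(X_\ell[k]))$, a symmetric function of the coordinates of $\bm{X}[k]$, and the recursion \eqref{eq:stat_WD_2} together with \eqref{eq:stat_WD} exhibits $W[k]$ as a fixed deterministic function of $\{T^{(i)}[j] : 1 \le i \le n-m+1,\ 1 \le j \le k\}$ (the $\rho_i$'s and $b$ being constants). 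In particular $\{\tau_W \ge \nu\} \in \mathcal{F}_{\nu-1}$, and on this event $\tau_W - \nu + 1$ is a measurable function of the state vector $(\Omega^{(1)}[\nu-1],\dots,\Omega^{(n-m+1)}[\nu-1])$ and of the sequence $\{T^{(i)}[k] : k \ge \nu\}$.

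The central step is to show that, for every $\nu$ and every $\bm{d}$, the joint law of $\{(T^{(1)}[k],\dots,T^{(n-m+1)}[k]) : k \ge \nu\}$ is the same under $\mathbb{E}^{\bm{S}}_{\nu,\bm{d}}$ for \emph{every} deterministic trajectory $\bm{S}$ and under $\overline{\mathbb{E}}_{\nu,\bm{d}}$. Since the observations are independent across time (given the phase structure) in both models, it suffices to match the marginal law of $(T^{(i)}[k])_i$ for each $k$. Fix $k$ in phase $i$, so that $|\bm{S}^{(i)}[k]| = m+i-1$ in the worst-path model. By the homogeneity assumption — the pre- and post-change pdfs $g$ and $f$ do not depend on the sensor index — the law under $p_{\bm{A}}$ of any symmetric function of $\bm{X}[k]$ is invariant under permuting $\bm{A}$, hence depends on $\bm{A}$ only through $|\bm{A}|$; applied to the symmetric vector $(T^{(1)}[k],\dots,T^{(n-m+1)}[k])$, this shows that its law under $p_{\bm{A}}$ is the same for all $\bm{A}$ of size $m+i-1$, and therefore equal to its law under the mixture $\overline{p}^{(i)} = \binom{L}{m+i-1}^{-1}\sum_{\bm{A}} p_{\bm{A}}$. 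This delivers the required marginal identity simultaneously for the worst-path model ($\bm{X}[k] \sim p_{\bm{S}^{(i)}[k]}$) and the random model ($\bm{X}[k] \sim \overline{p}^{(i)}$), and hence the joint identity.

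Combining these observations, for any $\bm{S}$, any $\nu$, and almost every realization of $\mathcal{F}_{\nu-1}$ lying in $\{\tau_W \ge \nu\}$: the pre-change laws agree (both equal $\prod_\ell g$ i.i.d.\ across time, independently of $\bm{S}$), the set $\{\tau_W \ge \nu\}$ and the state vector at time $\nu-1$ are the same functions of that realization, and the conditional law of $\{T^{(i)}[k] : k \ge \nu\}$ is the same; therefore $\mathbb{E}^{\bm{S}}_{\nu,\bm{d}}[\tau_W - \nu + 1 \mid \mathcal{F}_{\nu-1}, \tau_W \ge \nu] = \overline{\mathbb{E}}_{\nu,\bm{d}}[\tau_W - \nu + 1 \mid \mathcal{F}_{\nu-1}, \tau_W \ge \nu]$ a.e., and in particular the left side does not depend on $\bm{S}$. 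Taking the supremum over $\bm{S}$, then over $\nu \ge 1$, and the essential supremum on both sides (both essential suprema being with respect to the common pre-change law of $\mathcal{F}_{\nu-1}$, and with the convention that the conditional expectation equals $1$ when $\mathbb{P}_{\nu,\bm{d}}^{\bm{S}}(\tau_W \ge \nu)=0$ applied identically in both models) then yields $\mathrm{WADD}_{\bm{d}}(\tau_W) = \overline{\mathrm{WADD}}_{\bm{d}}(\tau_W)$. The step I expect to be the main obstacle is the symmetrization argument of the second paragraph — specifically verifying that uniformly averaging $p_{\bm{A}}$ over all node subsets of a fixed size induces exactly the same distribution for the symmetric statistics $(T^{(i)}[k])_i$ as any single fixed subset of that size — together with the measure-theoretic bookkeeping needed to push the pointwise a.e.\ identity of conditional expectations through the essential suprema and to absorb the degenerate conditioning events.
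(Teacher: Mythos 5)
Your proof is correct, and it takes a genuinely different and arguably cleaner route than the paper. The paper works through Lemma~\ref{thm:lemma_2} by a change of measure to the pre-change law, writing $\mathrm{WADD}_{\bm d}(\tau_W) = \lim_N \sum_{i=1}^N i\,\mathbb{E}_\infty[\Gamma_{\bm S}(i,1,\bm d)\mathbbm{1}_{\{\tau_W=i\}}]$, and then running a two-stage induction over the phases to replace $\Gamma_{\bm S}$ by $\mathcal{L}$ one time-slot at a time, exactly mirroring the inductive machinery of its Lemma~1 but with equalities rather than bounds (the equality at each step being furnished by the distributional identity $f(X_{S^{(1)}[1]}[1])/g(X_{S^{(1)}[1]}[1]) \stackrel{d}{=} f(X_\ell[1])/g(X_\ell[1])$ under $\mathbb{P}_\infty$, averaged over $\ell$). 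You instead isolate the structural reason that identity holds and push it all the way through: $\tau_W$ is a deterministic functional of the permutation-invariant statistics $T^{(i)}[k]$, these are symmetric in the sensor coordinates, homogeneity makes their joint law under $p_{\bm A}$ depend on $\bm A$ only through $|\bm A|$ and hence coincide with the law under the mixture $\overline p^{(i)}$, and independence across time then upgrades the per-slot identity to a process-level one. This yields the pointwise equality of the conditional delays $\mathbb{E}^{\bm S}_{\nu,\bm d}[\tau_W-\nu+1\mid\mathcal{F}_{\nu-1},\tau_W\ge\nu] = \overline{\mathbb{E}}_{\nu,\bm d}[\tau_W-\nu+1\mid\mathcal{F}_{\nu-1},\tau_W\ge\nu]$ a.e.\ for every $\nu$ and $\bm S$, which is stronger than what the paper actually establishes (the paper restricts to $\nu=1$ and compares unconditional expectations, leaning on the auxiliary facts that the worst case is at $\nu=1$ and that the delay of $\tau_W$ is $\bm S$-independent). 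Your argument subsumes those auxiliary facts rather than invoking them, avoids the double induction and the truncation-and-limit bookkeeping, and makes the role of homogeneity transparent; the paper's computation, on the other hand, stays entirely within the change-of-measure formalism it has already set up for Lemma~1, so the two lemmas read uniformly. One small caveat worth stating explicitly if you expand this: the essential supremum in both \eqref{eq:WADD_growing} and \eqref{eq:growing_random_delay} is over $\mathcal{F}_{\nu-1}$, whose law is the i.i.d.\ pre-change product measure under every model in play, so the null sets on which your a.e.\ identity may fail are the same for both sides and the essential suprema genuinely coincide.
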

\begin{proof}[Proof sketch]
Since the worst-case delay of the test in \eqref{eq:stat_WD} - \eqref{eq:stop_WD} is attained at $\nu=1$, by performing a change of measure we can show that
\begin{align}
\label{eq:yo_1}
\mathrm{WADD}_{\bm{d}}(\tau_W) = \lim\limits_{N \rightarrow \infty} \sum\limits_{i=1}^N i \mathbb{E}_{\infty}\left[\Gamma_{\bm{S}}(i,1,\bm{d})\mathbbm{1}_{\{\tau_W = i\}} \right].
\end{align}
Then, due to the symmetry of the proposed test and by bounding the sup by the average as in Lemma 1 we can show that
\begin{align}
\label{eq:yo_2}
 &\sum\limits_{i=1}^N i \mathbb{E}_{\infty}\left[\left(\Gamma_{\bm{S}}(i,1,\bm{d})-\mathcal{L}(i,1,\bm{d})\right)\mathbbm{1}_{\{\tau_W = i\}} \right] = 0.
\end{align}
Finally, by using eqs. \eqref{eq:yo_1}, \eqref{eq:yo_2} and a change of measure argument the lemma is established.
\end{proof}
Combining Lemmas 1, 2 and the theoretical results of \cite{Zou2017} we derive the asymptotic optimality of our test.
\begin{theorem}
Consider the QCD problem described in Sec. \ref{sec:Problem_model}. We have the following:

i) $b = \log \gamma$ implies that
\begin{align} 
\mathbb{E}_\infty[\tau_{W}] \geq \gamma.
\end{align}

ii) If $b=\log \gamma$, $\bm{d} $ satisfies \eqref{eq:d_scaling_general} as $\gamma \rightarrow \infty$ for some constants $c_i \in [0,\infty) \cup \{\infty\}, 1 \leq i\leq n -m$ and $c_{n - m+1} \triangleq \infty$, and $\rho_i$ is chosen such that
\begin{align}
\label{eq:important_condition}
\rho_i \rightarrow 0 \text{\quad and \quad} \frac{-\log\rho_i}{\log\gamma} \rightarrow 0,
\end{align}
for $1 \leq i\leq n -m$ as $\gamma \rightarrow \infty$ we have that
\begin{align}
&\inf_{\tau \in \mathcal{C}_\gamma } \mathrm{WADD}_{\bm{d}}(\tau)  \sim  \mathrm{WADD}_{\bm{d}}(\tau_{W}) \nonumber \\& \sim \log\gamma \left(\sum_{i=1}^{h-1}\frac{c_i}{I_{m+i-1}} + \frac{1 - \sum_{i=1}^{h-1} c_i }{I_{m+h-1}} \right),
\end{align}
where $h = \min\{1 \leq j \leq n -m +1 : \sum_{i=1}^j c_i\geq 1\}$.
\end{theorem}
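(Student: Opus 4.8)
The plan is to exploit the fact that the mixture-WD-CuSum statistic in \eqref{eq:stat_WD}--\eqref{eq:stat_WD_2} is \emph{literally} the WD-CuSum statistic of \cite{Zou2017} applied to the problem of detecting the transition from the product law \eqref{eq:growing_stat_model_1} to the sequence of mixtures \eqref{eq:random_stat_model_2}. For part (i) I would invoke the false-alarm analysis of \cite{Zou2017} verbatim: the recursions for $\Omega^{(i)}$ are built so that, under $\mathbb{P}_\infty$, an exponentiated version of the running maximum is dominated by a nonnegative supermartingale, and applying optional sampling at $\tau_W$ together with the standard renewal/up-crossing bound yields $\mathbb{E}_\infty[\tau_W] \geq e^{b}$; setting $b = \log\gamma$ gives $\mathbb{E}_\infty[\tau_W] \geq \gamma$, i.e.\ $\tau_W \in \mathcal{C}_\gamma$. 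The only thing to verify here is that the weights encoded by the $\rho_i$ form a bona fide sub-probability assignment over the transient-duration vectors, which is exactly the construction recalled after \eqref{eq:stop_WD}; no asymptotics are needed.

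For the lower bound in part (ii) I would combine Lemma~\ref{thm:lemma_1} with the universal asymptotic lower bound for transient QCD of \cite{Zou2017}. For any $\tau \in \mathcal{C}_\gamma$, Lemma~\ref{thm:lemma_1} gives $\mathrm{WADD}_{\bm{d}}(\tau) \geq \overline{\mathrm{WADD}}_{\bm{d}}(\tau)$, and the transient-QCD lower bound specialized to the duration scaling \eqref{eq:d_scaling_general} with KL numbers $I_{m+i-1}$ yields
\[
\overline{\mathrm{WADD}}_{\bm{d}}(\tau) \geq \log\gamma\left(\sum_{i=1}^{h-1}\frac{c_i}{I_{m+i-1}} + \frac{1-\sum_{i=1}^{h-1}c_i}{I_{m+h-1}}\right)(1+o(1)),
\]
with $h$ the first index at which the cumulative scaled durations exceed one. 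Taking the infimum over $\tau \in \mathcal{C}_\gamma$ gives the asymptotic lower bound claimed in the theorem.

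For the matching upper bound I would invoke the delay analysis of the WD-CuSum test in \cite{Zou2017}: since part (i) certifies the MTFA constraint with $b=\log\gamma$, the performance bound there gives $\overline{\mathrm{WADD}}_{\bm{d}}(\tau_W) \leq \log\gamma\big(\sum_{i=1}^{h-1}c_i/I_{m+i-1} + (1-\sum_{i=1}^{h-1}c_i)/I_{m+h-1}\big)(1+o(1))$, \emph{provided} the overshoot terms $\log\rho_i$ and $\log(1-\rho_i)$ contribute only $o(\log\gamma)$ to the delay --- which is precisely what condition \eqref{eq:important_condition} buys. Lemma~\ref{thm:lemma_2} then transfers this from the mixture model to the original worst-path model, since $\mathrm{WADD}_{\bm{d}}(\tau_W) = \overline{\mathrm{WADD}}_{\bm{d}}(\tau_W)$. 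Because $\tau_W \in \mathcal{C}_\gamma$, chaining the lower bound, the Lemma~\ref{thm:lemma_2} equality, and the upper bound sandwiches $\inf_{\tau \in \mathcal{C}_\gamma}\mathrm{WADD}_{\bm{d}}(\tau)$ and $\mathrm{WADD}_{\bm{d}}(\tau_W)$ between the same asymptotic expression, proving both equivalences.

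I expect the main obstacle to be not any single step but the bookkeeping needed to certify that the mixture laws $\overline{p}^{(i)}$ inherit all the regularity hypotheses under which the lower-bound and achievability theorems of \cite{Zou2017} were established --- finiteness and strict positivity of the $I_i$, the correct ordering that makes the definition of $h$ meaningful, and the higher-moment/uniform-integrability conditions on the log-likelihood ratios $\log(\overline{p}^{(i)}/g)$. Homogeneity of the sensors and finiteness of $L$ should make these routine, but they must be spelled out. A secondary point is justifying that the worst value of $\nu$ for $\tau_W$ is $\nu=1$, which is used in entering Lemma~\ref{thm:lemma_2}; this follows from a pathwise monotonicity of the recursion \eqref{eq:stat_WD_2} in its initial condition (the statistic started from its floor is stochastically smallest), and should be stated carefully.
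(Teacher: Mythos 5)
Your proposal is correct and follows essentially the same route as the paper: establish $\tau_W \in \mathcal{C}_\gamma$ via the MTFA bound on WD-CuSum from \cite{Zou2017}, obtain the asymptotic lower bound on $\inf_{\tau \in \mathcal{C}_\gamma} \mathrm{WADD}_{\bm{d}}(\tau)$ by combining Lemma~\ref{thm:lemma_1} with the universal transient-QCD lower bound, obtain the matching upper bound on $\mathrm{WADD}_{\bm{d}}(\tau_W)$ by combining Lemma~\ref{thm:lemma_2} with the WD-CuSum delay bound under condition \eqref{eq:important_condition}, and sandwich. The paper organizes the two directions as its appendix Theorems 2 and 3 and then cites them, while you spell out a few supporting details (the supermartingale/optional-sampling mechanism behind part (i), the $\nu=1$ worst-case claim, and the regularity hypotheses that must be inherited by the mixture laws $\overline{p}^{(i)}$) that the paper leaves to \cite{Zou2017}; the argument itself is the same.
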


\vspace{-0.2cm}
\section{Simulation Results and Discussion}
\label{sec:Numerical}
\vspace{-0.2cm}
In this section, we numerically evaluate the performance of the algorithm proposed in eqs. \eqref{eq:stat_WD} - \eqref{eq:stop_WD} for the case of $g = \mathcal{N}(0,1)$, $f = \mathcal{N}(1,1)$ and different $L$. Note that $\mathrm{WADD}_{\bm{d}}$ for the mixture-WD-CuSum test is attained at $\nu_1 = 1$, which together with the symmetry of our scheme implies that $\mathrm{WADD}_{\bm{d}}$ can be numerically evaluated. We use $\rho_i = \frac{1}{b}$ to guarantee that the conditions in \eqref{eq:important_condition} are satisfied.

In Fig. 1, we simulate mixture-WD-CuSum for the case of $m=1$, $n=3$, $d_1 = 9$, $d_2 =10$ and for $L=3,5,10$. Note that, as expected, for a fixed MTFA value the detection delay increases as the network size increases. This is justified since a larger network size implies that in \eqref{eq:stat_WD_2} the impact of the terms in the summation in $\overline{p}^{(i)}(\cdot)$ that will not contribute to the detection is more significant. Furthermore, we see that as the MTFA increases the slopes of the curves decrease gradually. This means that the mixture-WD-CuSum is adaptive to each transient phase (\cite{Zou2017}), since $I_i > I_{i'}$ for $i<i'$.

In Fig. 2, we inspect how much loss we suffer due to lack of knowledge of $m$ and $n$ by the decision maker. In particular, we consider the case of $m =2$, $n=4$, $d_1 = 9$, $d_2 =10$ and $L=6$ and compare the performance of the mixture-WD-CuSum test that exploits the knowledge of $m$ and $n$ with the mixture-WD-CuSum that assumes that $m=1$, $n=6$, due to the lack of knowledge of their true values. Note that, as expected, the algorithm that exploits the knowledge of $m$ and $n$ offers superior performance. It should be noted that the performance loss for our case study is not significant. Furthermore, note that partial information regarding $m$ or $n$ (e.g. bounds on their values) may be available in practical applications, which can be used to reduce the performance gap between the two procedures. However, it is expected that the performance loss will be significant as $L$ increases, if our estimates for $m$ and $n$ are not sufficiently accurate. 

\vspace{-0.2cm}
\begin{figure}[!htp]
\label{fig:fig_1}
\hspace{-0.51cm}
\includegraphics[width=9.8cm]{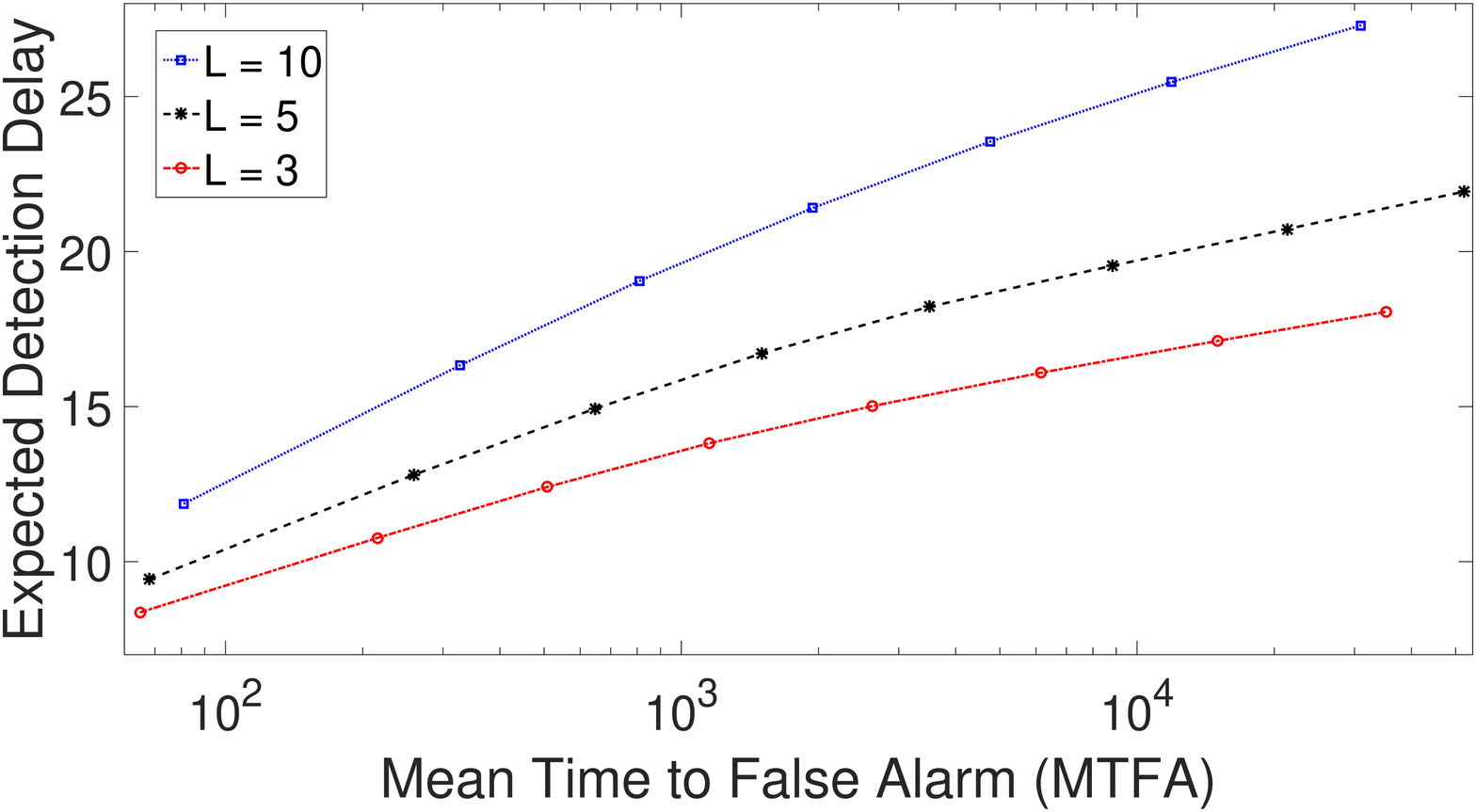}
\caption{$\mathrm{WADD}_{\bm{d}}$ vs MTFA for $m =1$, $n=3$ and varying network sizes.}
\end{figure}

\vspace{-0.2cm}
\begin{figure}[!htp]
\label{fig:fig_2}
\hspace{-0.51cm}
\includegraphics[width=9.8cm]{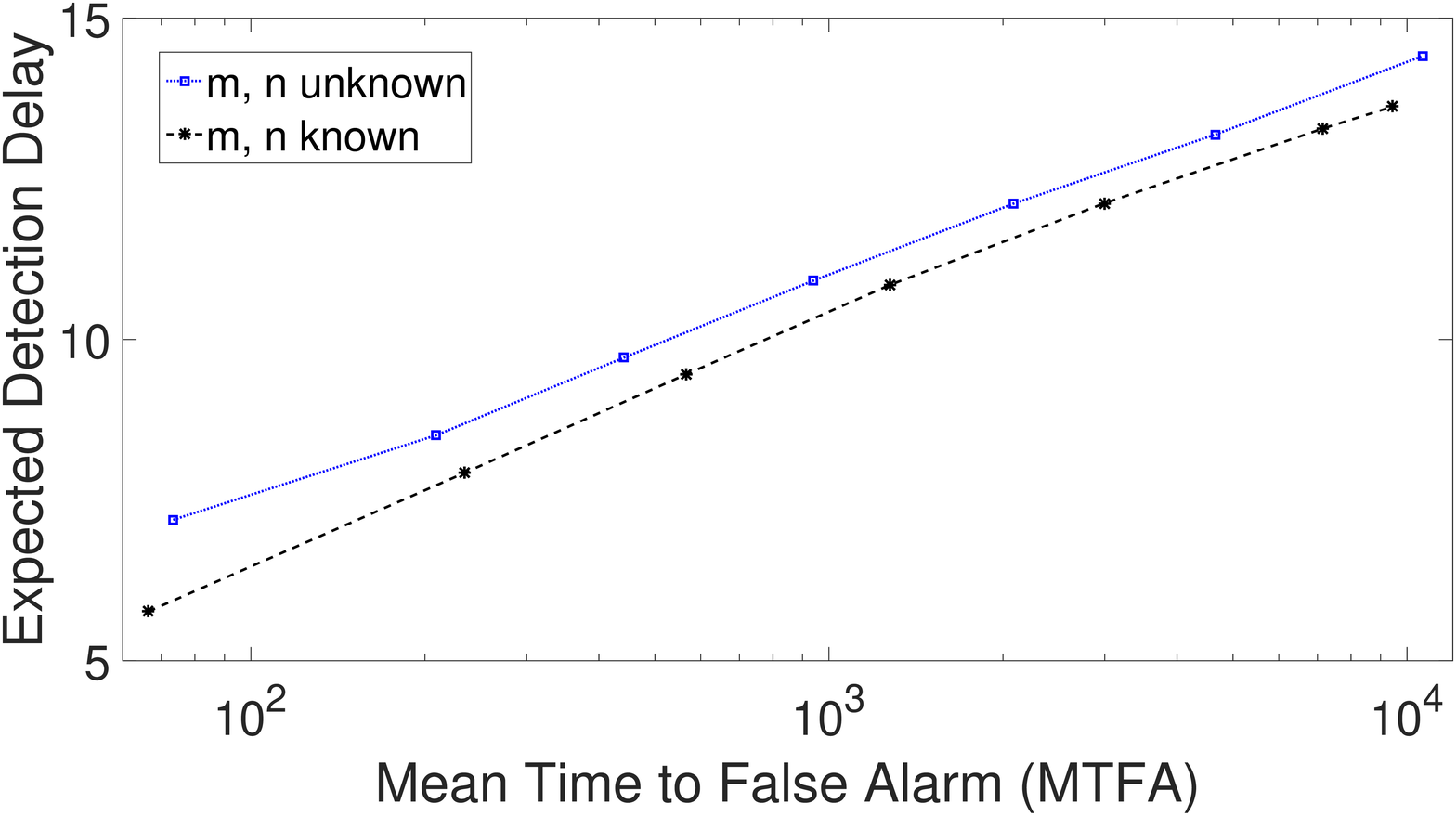}
\caption{$\mathrm{WADD}_{\bm{d}}$ vs MTFA comparison between the test that exploits and test that does not exploit knowledge of $m$ and $n$, for $m =2$, $n=4$ and $L=6$ .}\label{fig:1}
\end{figure}

\newpage

\bibliographystyle{IEEEbib}
\balance
\bibliography{CC_bibliography}


\newpage
\onecolumn

\section{Appendix}

For our theoretical analysis, we focus on the case that after the first changepoint (which in this section will be denoted by $\nu$) a single sensor is affected initially ($m =1$) with the anomaly growing to a persistent size of $n = 2$ affected sensors while still being dynamic in nature. The results in this paper hold for the case of arbitrary $1\leq m \leq n \leq L$ known by the decision maker, but in that case the analysis becomes cumbersome. 

Consider the sequences $S^{(1)} = \{{S^{(1)}[k]}\}_{k=1}^{\infty}$ and $\bm{S}^{(2)} = \{{\bm{S}^{(2)}[k]}\}_{k=1}^{\infty}$ where $ \bm{S}^{(2)}[k] = \left[ S_1^{(2)}[k] , S^{(2)}_2[k] \right]^\top$ with $S^{(1)}[k], S^{(2)}_1[k],S^{(2)}_2[k] \in [L] $, which characterize the location of the anomalous nodes at each time instant. Then, for fixed changepoints $\nu$ and $\nu+d$, $\nu \geq 1$, $d\geq 0$, we have the following statistical model:
\small
\begin{align}
\label{eq:stat_model}
\bm{X}[k] \sim \left\{
\begin{array}{ll}
      g\left(\bm{X}[k]\right)  \triangleq \prod\limits_{\ell =1}^L g\left(X_\ell[k]\right) & 1\leq k \leq \nu -1 \\
      p_{S^{(1)}[k]}\left(\bm{X}[k]\right) \triangleq f\left(X_{S^{(1)}[k]}[k]\right) \prod\limits_{\ell \neq S^{(1)}[k]} g\left(X_{\ell}[k]\right) & \nu \leq k \leq \nu +d -1 \\
      p_{\bm{S}^{(2)}[k]}\left(\bm{X}[k]\right) \triangleq f\left(X_{S^{(2)}_1[k]}[k]\right) f\left(X_{S^{(2)}_2[k]}[k]\right) \prod\limits_{\ell \neq S^{(2)}_1[k],S^{(2)}_2[k]} g\left(X_{\ell}[k]\right) & k \geq \nu +d. \\
\end{array} 
\right. 
\end{align}
\normalsize
For fixed set of sequences $\bm{S} = \{ \bm{S}^{(1)} , \bm{S}^{(2)}\}$, and constants $\nu,d$ 
define by $\Gamma_{\bm{S}}(k,\nu,d)$ the likelihood ratio at time $k$ between the hypothesis that the dynamic anomaly appears at time $\nu$ and grows to a size of $2$ at time $\nu+d$ with the trajectory of the anomalous nodes being specified by $\bm{S}$, and the hypothesis that the anomaly never appears. As a result, we have that 

\begin{align}
\Gamma_{\bm{S}}(k,\nu,d) = \left[\prod\limits_{j=\nu}^{\min\{ k , \nu+d-1\}}  \frac{f(X_{S^{(1)}[j]}[j])}{g(X_{S^{(1)}[j]}[j])}  \right]\cdot \left[\prod\limits_{i=\nu+d}^{k}  \frac{f(X_{S^{(2)}_1[j]}[i])f(X_{S^{(2)}_2[j]}[j])}{g(X_{S^{(2)}_1[j]}[j])g(X_{S^{(2)}_2[j]}[j])}  \right].
\end{align}
Furthermore, define by $\mathcal{L}(k,\nu,d)$ the likelihood ratio at time $k$ between the hypothesis that the dynamic anomaly appears at time $\nu$ and grows to a size of $2$ at time $\nu+d$ with the trajectory of the anomalous nodes chosen randomly as in \eqref{eq:growing_stat_model_1}, \eqref{eq:random_stat_model_2} , and the hypothesis that the anomaly never appears. Then, we have that 
\begin{align}
\mathcal{L} (k,\nu,d) & \nonumber=\left[ \prod\limits_{i=\nu}^{\min\{k,\nu+d-1\}} \frac{\overline{p}^{(1)}(\bm{X}[i])}{g(\bm{X}[i])} \right]\cdot \left[ \prod\limits_{i=\nu+d}^{k} \frac{\overline{p}^{(2)}(\bm{X}[i])}{g(\bm{X}[i])} \right] 
\\&
 =  \left[ \prod\limits_{i=\nu}^{\min\{k,\nu+d-1\}}\left(\sum\limits_{\ell=1}^L \frac{1}{L} \frac{  f(X_{\ell}[i])}{  g(X_{\ell}[i])} \right)\right]\cdot \left[ \prod\limits_{i=\nu+d}^{k} \left(  \sum\limits_{(\ell, \ell') \in \, \text{comb}(L,2)} \, \frac{1}{\binom L2} \frac{f(X_{\ell}[i]) f(X_{\ell'}[i])}{g(X_{\ell}[i]) g(X_{\ell'}[i]} \right)\right].
\end{align}

Note that for any sequence $\{\alpha[k]\}_{k=1}^\infty$ we use the convention that $\sum\limits_{k_1}^{k_2}\alpha[k] \triangleq 0$ and $\prod\limits_{k_1}^{k_2} \alpha[k]\triangleq 1$ when $k_1 > k_2$ which implies that $\Gamma_{\bm{S}}(k_2,k_1,d) \triangleq 1$ and $\mathcal{L}(k_2,k_1,d) \triangleq 1$ when $k_1 > k_2$. Furthermore, we define by $a[k_2,k_1]$ the vector consisting of the terms $\left[a[k_2],\dots,a[k_1]\right]^\top.$

Before establishing Lemmas \ref{thm:lemma_1} and \ref{thm:lemma_2} we prove a theorem connecting the delay of a stopping rule $\tau$ to the delay of its truncated version.
\begin{lemma}
\label{thm:lemma_trunc}
For any stopping rule $\tau$, define its truncated version by $\tau^{(N)} \triangleq \min \{ \tau, N\}$ where $N$ is a positive integer. Then, we have that for any $d \geq 0$
\begin{align}
\label{eq:math_4}
\mathrm{WADD}_d(\tau)  \geq\mathrm{WADD}_d(\tau^{(N)}) .
\end{align}
\begin{proof}
Fix $N\geq 1$. Consider initially the case that $N\geq\nu$. Then, since $\{\tau^{(N)} \geq\nu\} = \{\min\{\tau,N\} \geq\nu\} = \{\tau\geq\nu\} \cap \{N\geq\nu\}$, we have that $\{\tau^{(N)}\geq\nu\} = \{\tau\geq\nu\}.$ Since $\tau^{(N)} \leq \tau$, this implies that for any $N \geq \nu$ and any $\bm{S}$ we have that
\begin{align}
\label{eq:math_1}
\mathbb{E}_{\nu,d}^{\bm{S}}[\tau^{(N)} - \nu +1 | \mathcal{F}_{\nu -1}, \tau^{(N)} \geq \nu] =\mathbb{E}_{\nu,d}^{\bm{S}}[\tau^{(N)} - \nu +1 | \mathcal{F}_{\nu -1}, \tau \geq \nu] \leq \mathbb{E}_{\nu,d}^{\bm{S}}[\tau - \nu +1 | \mathcal{F}_{\nu -1}, \tau \geq \nu].
\end{align}
For the case that $N<\nu$, we have that that $\mathbb{P}_{\nu,d}^{\bm{S}}(\tau^{(N)} \geq \nu)=0$, which implies that for any $N < \nu$ and any $\bm{S}$ we have that
\begin{align}
\label{eq:math_2}
\mathbb{E}_{\nu,d}^{\bm{S}} \left[\tau^{(N)}-\nu+1 |\mathcal{F}_{\nu-1},\tau^{(N)}\geq \nu  \right]=1,
\end{align}
by convention. Furthermore, note that for any $\bm{S}$ we have that
\begin{align}
\label{eq:math_3}
\mathbb{E}_{\nu,d}^{\bm{S}} \left[\tau-\nu+1 |\mathcal{F}_{\nu-1},\tau\geq \nu  \right]\geq 1.
\end{align}
From \eqref{eq:math_1} - \eqref{eq:math_3} we have that for any $\nu \geq 1$ and any $\bm{S}$
\begin{align}
\mathbb{E}_{\nu,d}^{\bm{S}}[\tau^{(N)} - \nu +1 | \mathcal{F}_{\nu -1}, \tau^{(N)} \geq \nu] \leq \mathbb{E}_{\nu,d}^{\bm{S}}[\tau - \nu +1 | \mathcal{F}_{\nu -1}, \tau \geq \nu].
\end{align}
By taking the sup and $\text{ess}\sup$ on both sides the lemma is established.
\end{proof}
\end{lemma}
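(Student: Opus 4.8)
The plan is to establish the inequality first at the level of the individual conditional delays, for a fixed changepoint and a fixed anomaly trajectory, and only then take the suprema that define $\mathrm{WADD}_d$. Fix a positive integer $N$, a changepoint $\nu \geq 1$, and a trajectory family $\bm{S}$. I would begin by recording that $\tau^{(N)} = \min\{\tau, N\}$ is again a stopping time (the minimum of a stopping time with a deterministic constant), so that $\mathrm{WADD}_d(\tau^{(N)})$ is well defined, and then aim to show that $\mathbb{P}_{\nu,d}^{\bm{S}}$-almost surely
\[
\mathbb{E}_{\nu,d}^{\bm{S}}[\tau^{(N)} - \nu + 1 \mid \mathcal{F}_{\nu-1}, \tau^{(N)} \geq \nu] \leq \mathbb{E}_{\nu,d}^{\bm{S}}[\tau - \nu + 1 \mid \mathcal{F}_{\nu-1}, \tau \geq \nu].
\]
Granting this, I would apply $\mathrm{ess\,sup}$, then $\sup_{\nu \geq 1}$, then $\sup_{\bm{S}}$ to both sides; each of these operations is monotone and the bound above holds for every $\nu$ and $\bm{S}$, so it is preserved, giving $\mathrm{WADD}_d(\tau^{(N)}) \leq \mathrm{WADD}_d(\tau)$, which is the assertion.

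To obtain the displayed inequality I would split on whether $N \geq \nu$ or $N < \nu$. In the case $N \geq \nu$, the key observation is that $\{\tau^{(N)} \geq \nu\} = \{\min\{\tau, N\} \geq \nu\} = \{\tau \geq \nu\} \cap \{N \geq \nu\} = \{\tau \geq \nu\}$, since $N \geq \nu$ holds deterministically; hence the event and the $\sigma$-algebra one conditions on coincide on the two sides. As $\tau^{(N)} \leq \tau$ pointwise, monotonicity of conditional expectation then gives the inequality directly (and in the degenerate situation $\mathbb{P}_{\nu,d}^{\bm{S}}(\tau \geq \nu) = 0$ both sides equal $1$ by the paper's convention, so nothing needs to be checked).

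In the case $N < \nu$, we have $\tau^{(N)} = \min\{\tau, N\} \leq N < \nu$, so $\mathbb{P}_{\nu,d}^{\bm{S}}(\tau^{(N)} \geq \nu) = 0$ and the left-hand side equals $1$ by convention. For the right-hand side, either $\mathbb{P}_{\nu,d}^{\bm{S}}(\tau \geq \nu) = 0$, in which case it too equals $1$ by convention, or on $\{\tau \geq \nu\}$ one has $\tau - \nu + 1 \geq 1$, so the conditional expectation is at least $1$ almost surely; either way the right-hand side is $\geq 1$. Combining the two cases yields the displayed inequality for all $\nu$ and $\bm{S}$, and the suprema step above finishes the proof.

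I expect this argument to be essentially elementary, with the only delicate points being the bookkeeping of zero-probability conditioning events (absorbed into the convention that the conditional delay is $1$ there) and the identity $\{\tau^{(N)} \geq \nu\} = \{\tau \geq \nu\}$ when $N \geq \nu$, which is exactly what legitimizes the monotonicity step; no change-of-measure argument or machinery beyond monotonicity of conditional expectation should be needed.
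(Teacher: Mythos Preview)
Your proposal is correct and follows essentially the same approach as the paper's proof: split into the cases $N \geq \nu$ and $N < \nu$, use the identity $\{\tau^{(N)} \geq \nu\} = \{\tau \geq \nu\}$ together with $\tau^{(N)} \leq \tau$ in the first case, invoke the convention in the second, and then pass to the suprema. Your treatment is slightly more careful in explicitly handling the zero-probability conditioning events, but the argument is otherwise identical.
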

Now we are ready to establish Lemma \ref{thm:lemma_1}.

\begin{proof}[Proof of Lemma 1]
From Lemma \ref{thm:lemma_trunc}, we have that for any $\nu$, $d$, $N\geq 1$ 
\begin{align}
\label{eq:eqe}
\mathrm{WADD}_d(\tau) \geq \mathrm{WADD}_d(\tau^{(N)}) \geq \sup_{\bm{S}}\mathbb{E}_{\nu,d}^{\bm{S}}[\tau^{(N)} - \nu +1 | \mathcal{F}_{\nu -1}, \tau^{(N)} \geq \nu].
\end{align}
Following, we have that for any $\nu, d, \bm{S}$ and $N > \nu+d$
\begin{align}
\label{eq:eq_similar}
&\nonumber \mathbb{E}_{\nu,d}^{\bm{S}}[\tau^{(N)} - \nu +1 | \mathcal{F}_{\nu -1}, \tau^{(N)} \geq \nu] = \mathbb{E}_{\nu,d}^{\bm{S}}\left[\sum\limits_{i =\nu}^\infty \mathbbm{1}_{\{\tau^{(N)} \geq i\}} \bigg| \mathcal{F}_{\nu -1}, \tau^{(N)} \geq \nu\right] \\& \stackrel{(\text{a})}{=}  \mathbb{E}_{\nu,d}^{\bm{S}}\left[\sum\limits_{i =\nu}^N \nonumber\mathbbm{1}_{\{\tau^{(N)} \geq i\}} \bigg| \mathcal{F}_{\nu -1}, \tau^{(N)} \geq \nu\right] =  \sum\limits_{i =\nu}^N \mathbb{E}_{\nu,d}^{\bm{S}}\left[\mathbbm{1}_{\{\tau^{(N)} \geq i\}} \bigg| \mathcal{F}_{\nu -1}, \tau^{(N)} \geq \nu\right] \\&\stackrel{(\text{b})}{=}   \sum\limits_{i =\nu}^N \mathbb{E}_{\infty}\left[\mathbbm{1}_{\{\tau^{(N)} \geq i\}} \Gamma_{\bm{S}}\left(i-1,\nu,d\right) \bigg| \mathcal{F}_{\nu -1}, \tau^{(N)} \geq \nu\right] =  \mathbb{E}_{\infty}\left[ \sum\limits_{i =\nu}^N \mathbbm{1}_{\{\tau^{(N)} \geq i\}} \Gamma_{\bm{S}}\left(i-1,\nu,d\right) \bigg| \mathcal{F}_{\nu -1}, \tau^{(N)} \geq \nu\right] \nonumber \\&=  \mathbb{E}_{\infty}\left[  \mathbbm{1}_{\{\tau^{(N)} \geq \nu\}} \Gamma_{\bm{S}}\left(\nu-1,\nu,d\right) \bigg| \mathcal{F}_{\nu -1}, \tau^{(N)} \geq \nu\right]  +\mathbb{E}_{\infty}\left[ \sum\limits_{i =\nu+1}^N \mathbbm{1}_{\{\tau^{(N)} \geq i\}} \Gamma_{\bm{S}}\left(i-1,\nu,d\right) \bigg| \mathcal{F}_{\nu -1}, \tau^{(N)} \geq \nu\right] \nonumber \\&=  \mathbb{E}_{\infty}\left[  \mathbbm{1}_{\{\tau^{(N)} \geq \nu\}} \nonumber \bigg| \mathcal{F}_{\nu -1}, \tau^{(N)} \geq \nu\right]  +\mathbb{E}_{\infty}\left[ \sum\limits_{i =\nu+1}^N \mathbbm{1}_{\{\tau^{(N)} \geq i\}} \Gamma_{\bm{S}}\left(i-1,\nu,d\right) \bigg| \mathcal{F}_{\nu -1}, \tau^{(N)} \geq \nu\right] \\& \stackrel{(\text{c})}{=} \mathbb{E}_{\infty}\left[  \mathbbm{1}_{\{\tau^{(N)} \geq \nu\}} \bigg| \mathcal{F}_{\nu -1}, \tau^{(N)} \geq \nu\right]  +\mathbb{E}_{\infty}\left[ \sum\limits_{i =\nu}^{N-1} \mathbbm{1}_{\{\tau^{(N)} > i\}} \Gamma_{\bm{S}}\left(i,\nu,d\right) \bigg| \mathcal{F}_{\nu -1}, \tau^{(N)} \geq \nu\right]   ,
\end{align}
where (a) follows since $\mathbbm{1}_{\{\tau^{(N)} >i\}}= 0$ for $i>N$ because $\tau^{(N)} \leq N$, (b) follows from a change of measure, and (c) from a change of variables. As a result, by taking the supremum over $\bm{S}$ we have that
\small
\begin{align}
\label{eq:eqf}
&\sup_{\bm{S}} \mathbb{E}_{\nu,d}^{\bm{S}}[\tau^{(N)} - \nu +1 | \mathcal{F}_{\nu -1}, \tau^{(N)} \geq \nu] = \mathbb{E}_{\infty}\left[  \mathbbm{1}_{\{\tau^{(N)} \geq \nu\}} \nonumber \bigg| \mathcal{F}_{\nu -1}, \tau^{(N)} \geq \nu\right] + \nonumber\sup_{\bm{S}}\mathbb{E}_{\infty}\left[ \sum\limits_{i =\nu}^{N -1}  \Gamma_{\bm{S}} \left(i,\nu,d\right)  \mathbbm{1}_{\{i < \tau^{(N)} \}}  \bigg| \mathcal{F}_{\nu -1}, \tau^{(N)} \geq \nu\right] \\&\stackrel{(\text{e})}{=} \mathbb{E}_{\infty}\left[  \mathbbm{1}_{\{\tau^{(N)} \geq \nu\}} \nonumber \bigg| \mathcal{F}_{\nu -1}, \tau^{(N)} \geq \nu\right]  + \sup_{S^{(1)}[1,N-1],\bm{S}^{(2)}[1,N-1]}\mathbb{E}_{\infty}\left[ \sum\limits_{i =\nu}^{N -1}  \Gamma_{\bm{S}} \left(i,\nu,d\right)  \mathbbm{1}_{\{i < \tau^{(N)} \}}  \bigg| \mathcal{F}_{\nu -1}, \tau^{(N)} \geq \nu\right] \nonumber 
 \\&\stackrel{(\text{f})}{=} \mathbb{E}_{\infty}\left[  \mathbbm{1}_{\{\tau^{(N)} \geq \nu\}}  \bigg| \mathcal{F}_{\nu -1}, \tau^{(N)} \geq \nu\right]  + \sup_{S^{(1)}[\nu,\nu+d-1],\bm{S}^{(2)}[\nu+d,N-1]}\mathbb{E}_{\infty}\left[ \sum\limits_{i =\nu}^{N -1}  \Gamma_{\bm{S}} \left(i,\nu,d\right)   \mathbbm{1}_{\{i < \tau^{(N)} \}}  \bigg| \mathcal{F}_{\nu -1}, \tau^{(N)} \geq \nu\right].
\end{align}
\normalsize
where (e) follows since the summation in the second expectation is from $i=\nu$ to $N-1$ which implies that only the first $N-1$ samples are involved in the calculation of $\Gamma_{\bm{S}} \left(i,\nu,d\right) $. Lastly, (f) follows because of the changepoints $\nu$ and $\nu+d$. Define
\begin{align}
B \triangleq \sup_{S^{(1)}[\nu,\nu+d-1],\bm{S}^{(2)}[\nu+d,N-1]}\mathbb{E}_{\infty}\left[ \sum\limits_{i =\nu}^{N -1}  \Gamma_{\bm{S}} \left(i,\nu,d\right)   \mathbbm{1}_{\{i < \tau^{(N)} \}}  \bigg| \mathcal{F}_{\nu -1}, \tau^{(N)} \geq \nu\right].
\end{align}
The proof continues by using the fact that the sup can be lower bound by the average. By using induction we can show that the sup can be completely removed. Consider the case of $d>0$. We first establish by induction that for $ 1 \leq \zeta \leq d$ we have that
\begin{align}
\label{eq:fact_1}
B &\nonumber\geq \mathbb{E}_\infty \left[\sum\limits_{i=\nu}^{\nu+\zeta-1} \mathcal{L}(i,\nu,\zeta)\mathbbm{1}_{\{\tau^{(N)} >i\}} \bigg| \mathcal{F}_{\nu-1},\tau^{(N)} \geq \nu\right] \\&+ \sup_{S^{(1)}[\nu +\zeta,\nu+d-1],\bm{S}^{(2)}[\nu+d,N-1]} \mathbb{E}_\infty \left[ \sum\limits_{i=\nu+\zeta}^{N-1} \mathcal{L}(\nu+\zeta-1,\nu,\zeta)\Gamma_{\bm{S}}(i,\nu+\zeta,d-\zeta) \mathbbm{1}_{\{\tau^{(N)} >i\}}\bigg| \mathcal{F}_{\nu-1},\tau^{(N)} \geq \nu \right].
\end{align}
Note that here by convention $\zeta = d$ implies that there is no sup over $S^{(1)}$. We first prove the claim for the case of $\zeta = 1$, establishing the basis of the induction. Note that for all $\ell \in [L]$ we have that
\begin{align}
B &\nonumber =  \sup_{S^{(1)}[\nu+1,\nu+d-1],\bm{S}^{(2)}[\nu+d,N-1]} \left\{ \sup_{S^{(1)}[\nu]} \mathbb{E}_{\infty}\left[ \sum\limits_{i =\nu}^{N -1}  \Gamma_{\bm{S}} \left(\nu,\nu,1\right) \Gamma_{\bm{S}} \left(i,\nu+1,d-1\right)  \mathbbm{1}_{\{i < \tau^{(N)} \}}  \bigg| \mathcal{F}_{\nu -1}, \tau^{(N)} \geq \nu\right]\right\} \nonumber  \\&=\sup_{S^{(1)}[\nu+1,\nu+d-1],\bm{S}^{(2)}[\nu+d,N-1]} \left\{ \sup_{S^{(1)}[\nu]} \mathbb{E}_{\infty}\left[ \sum\limits_{i =\nu}^{N -1}  \frac{f(X_{S^{(1)}[\nu]}[\nu])}{g(X_{S^{(1)}[\nu]}[\nu])}  \Gamma_{\bm{S}} \left(i,\nu+1,d-1\right)    \mathbbm{1}_{\{i < \tau^{(N)} \}}  \bigg| \mathcal{F}_{\nu -1}, \tau^{(N)} \geq \nu\right]\right\} \nonumber \\&\stackrel{(\text{g})}{\geq}\sup_{S^{(1)}[\nu+1,\nu+d-1],\bm{S}^{(2)}[\nu+d,N-1]} \left\{  \mathbb{E}_{\infty}\left[  \frac{f(X_{\ell}[\nu])}{g(X_{\ell}[\nu])} \sum\limits_{i =\nu}^{N -1}  \Gamma_{\bm{S}} \left(i,\nu+1,d-1\right)   \mathbbm{1}_{\{i < \tau^{(N)} \}}  \bigg| \mathcal{F}_{\nu -1}, \tau^{(N)} \geq \nu\right]\right\},
\end{align}
where (g) follows by the definition of the sup and since under $\mathbb{P}_\infty(\cdot)$,  $\Gamma_{\bm{S}} \left(i,\nu+1,d-1\right)$ is independent of $S^{(1)}[\nu]$ for $i \geq \nu$. By multiplying with $\frac{1}{L}$ and summing over $\ell \in [L]$ we have that
\begin{align}
\label{eq:eq_average_1}
B &\geq \sum\limits_{\ell=1}^L\sup_{S^{(1)}[\nu+1,\nu+d-1],\bm{S}^{(2)}[\nu+d,N-1]}  \nonumber \left\{  \mathbb{E}_{\infty}\left[ \frac{1}{L} \frac{f(X_{\ell}[\nu])}{g(X_{\ell}[\nu])} \sum\limits_{i =\nu}^{N -1}  \Gamma_{\bm{S}} \left(i,\nu+1,d-1\right)   \mathbbm{1}_{\{i < \tau^{(N)} \}}  \bigg| \mathcal{F}_{\nu -1}, \tau^{(N)} \geq \nu\right]\right\} \\& \geq\sup_{S^{(1)}[\nu+1,\nu+d-1],\bm{S}^{(2)}[\nu+d,N-1]} \sum\limits_{\ell=1}^L\left\{  \mathbb{E}_{\infty}\left[ \frac{1}{L} \frac{f(X_{\ell}[\nu])}{g(X_{\ell}[\nu])} \sum\limits_{i =\nu}^{N -1} \nonumber \Gamma_{\bm{S}} \left(i,\nu+1,d-1\right)   \mathbbm{1}_{\{i < \tau^{(N)} \}}  \bigg| \mathcal{F}_{\nu -1}, \tau^{(N)} \geq \nu\right]\right\} \\& \geq\sup_{S^{(1)}[\nu+1,\nu+d-1],\bm{S}^{(2)}[\nu+d,N-1]} \left\{  \mathbb{E}_{\infty}\left[ \left(\sum\limits_{\ell=1}^L \frac{1}{L} \frac{f(X_{\ell}[\nu])}{g(X_{\ell}[\nu])} \right)\sum\limits_{i =\nu}^{N -1} \nonumber \Gamma_{\bm{S}} \left(i,\nu+1,d-1\right)    \mathbbm{1}_{\{i < \tau^{(N)} \}}  \bigg| \mathcal{F}_{\nu -1}, \tau^{(N)} \geq \nu\right]\right\} \\& =\sup_{S^{(1)}[\nu+1,\nu+d-1],\bm{S}^{(2)}[\nu+d,N-1]} \left\{  \mathbb{E}_{\infty}\left[ \sum\limits_{i =\nu}^{N -1} \mathcal{L}(\nu,\nu,1)\nonumber \Gamma_{\bm{S}} \left(i,\nu+1,d-1\right)   \mathbbm{1}_{\{i < \tau^{(N)} \}}  \bigg| \mathcal{F}_{\nu -1}, \tau^{(N)} \geq \nu\right]\right\} \\& =\sup_{S^{(1)}[\nu+1,\nu+d-1],\bm{S}^{(2)}[\nu+d,N-1]} \left\{  \mathbb{E}_{\infty}\left[  \mathcal{L}(\nu,\nu,1)\nonumber \Gamma_{\bm{S}} \left(\nu,\nu+1,d-1\right)   \mathbbm{1}_{\{\nu < \tau^{(N)} \}}  \bigg| \mathcal{F}_{\nu -1}, \tau^{(N)} \geq \nu\right]\right\} \nonumber\\&\nonumber+ \sup_{S^{(1)}[\nu+1,\nu+d-1],\bm{S}^{(2)}[\nu+d,N-1]} \left\{  \mathbb{E}_{\infty}\left[ \sum\limits_{i =\nu+1}^{N -1} \mathcal{L}(\nu,\nu,1)\nonumber \Gamma_{\bm{S}} \left(i,\nu+1,d-1\right)   \mathbbm{1}_{\{i < \tau^{(N)} \}}  \bigg| \mathcal{F}_{\nu -1}, \tau^{(N)} \geq \nu\right]\right\} \\& =\sup_{S^{(1)}[\nu+1,\nu+d-1],\bm{S}^{(2)}[\nu+d,N-1]} \left\{  \mathbb{E}_{\infty}\left[  \mathcal{L}(\nu,\nu,1)\nonumber   \mathbbm{1}_{\{\nu < \tau^{(N)} \}}  \bigg| \mathcal{F}_{\nu -1}, \tau^{(N)} \geq \nu\right]\right\} \nonumber\\&\nonumber+ \sup_{S^{(1)}[\nu+1,\nu+d-1],\bm{S}^{(2)}[\nu+d,N-1]} \left\{  \mathbb{E}_{\infty}\left[ \sum\limits_{i =\nu+1}^{N -1} \mathcal{L}(\nu,\nu,1)\nonumber \Gamma_{\bm{S}} \left(i,\nu+1,d-1\right)  \mathbbm{1}_{\{i < \tau^{(N)} \}}  \bigg| \mathcal{F}_{\nu -1}, \tau^{(N)} \geq \nu\right]\right\}   \\& =   \mathbb{E}_{\infty}\left[  \mathcal{L}(\nu,\nu,1)\nonumber    \mathbbm{1}_{\{\nu < \tau^{(N)} \}}  \bigg| \mathcal{F}_{\nu -1}, \tau^{(N)} \geq \nu\right] \nonumber\\&+ \sup_{S^{(1)}[\nu+1,\nu+d-1],\bm{S}^{(2)}[\nu+d,N-1]} \left\{  \mathbb{E}_{\infty}\left[ \sum\limits_{i =\nu+1}^{N -1} \mathcal{L}(\nu,\nu,1)\Gamma_{\bm{S}} \left(i,\nu+1,d-1\right)    \mathbbm{1}_{\{i < \tau^{(N)} \}}  \bigg| \mathcal{F}_{\nu -1}, \tau^{(N)} \geq \nu\right]\right\},
\end{align}
which establishes the basis of the induction. Assume that the following claim holds for $3 \leq \zeta \leq d$:
\small
\begin{align}
\label{eq:eqa}
B &\nonumber\geq \mathbb{E}_\infty \left[\sum\limits_{i=\nu}^{\nu+\zeta-2} \mathcal{L}(i,\nu,\zeta-1)\mathbbm{1}_{\{\tau^{(N)} >i\}} \bigg| \mathcal{F}_{\nu-1},\tau^{(N)} \geq \nu\right] \\&+ \sup_{S^{(1)}[\nu+\zeta-1,\nu+d-1],\bm{S}^{(2)}[\nu+d,N-1]} \mathbb{E}_\infty \left[ \sum\limits_{i=\nu+\zeta-1}^{N-1} \mathcal{L}(\nu+\zeta-2,\nu,\zeta-1)\Gamma_{\bm{S}}(i,\nu+\zeta-1,d-\zeta+1) \mathbbm{1}_{\{\tau^{(N)} >i\}}\bigg| \mathcal{F}_{\nu-1},\tau^{(N)} \geq \nu \right].
\end{align}
\normalsize
Then, we have that
\small
\begin{align}
\label{eq:eqb}
& \nonumber\sup_{S^{(1)}[\nu+\zeta-1,\nu+d-1],\bm{S}^{(2)}[\nu+d,N-1]} \mathbb{E}_\infty \left[ \sum\limits_{i=\nu+\zeta-1}^{N-1} \mathcal{L}(\nu+\zeta-2,\nu,\zeta-1)\Gamma_{\bm{S}}(i,\nu+\zeta-1,d-\zeta+1)\mathbbm{1}_{\{\tau^{(N)} >i\}}\bigg| \mathcal{F}_{\nu-1},\tau^{(N)} \geq \nu \right] \nonumber \\\nonumber &= \sup_{S^{(1)}[\nu+\zeta,\nu+d-1],\bm{S}^{(2)}[\nu+d,N-1]} \\& \nonumber \left\{\sup_{S^{(1)}[\nu+\zeta-1]} \mathbb{E}_\infty \left[ \Gamma_{\bm{S}}(\nu+\zeta-1,\nu+\zeta-1,1) \sum\limits_{i=\nu+\zeta-1}^{N-1} \mathcal{L}(\nu+\zeta-2,\nu,\zeta-1)\Gamma_{\bm{S}}(i,\nu+\zeta,d-\zeta) \mathbbm{1}_{\{\tau^{(N)} >i\}}\bigg| \mathcal{F}_{\nu-1},\tau^{(N)} \geq \nu \right] \right\}\\\nonumber &= \sup_{S^{(1)}[\nu+\zeta,\nu+d-1],\bm{S}^{(2)}[\nu+d,N-1]} \\& \nonumber \left\{\sup_{S^{(1)}[\nu+\zeta-1]} \mathbb{E}_\infty \left[  \frac{f(X_{S^{(1)}[\nu+\zeta-1]}[\nu+\zeta-1])}{g(X_{S^{(1)}[\nu+\zeta-1]}[\nu+\zeta-1])} \sum\limits_{i=\nu+\zeta-1}^{N-1} \mathcal{L}(\nu+\zeta-2,\nu,\zeta-1)\Gamma_{\bm{S}}(i,\nu+\zeta,d-\zeta) \mathbbm{1}_{\{\tau^{(N)} >i\}}\bigg| \mathcal{F}_{\nu-1},\tau^{(N)} \geq \nu \right] \right\}\\& \nonumber \stackrel{(\text{h})}{\geq}  \sup_{S^{(1)}[\nu+\zeta,\nu+d-1],\bm{S}^{(2)}[\nu+d,N-1]} \\& \nonumber \mathbb{E}_\infty \left[ \mathcal{L}(\nu+\zeta-1,\nu+\zeta-1,1) \sum\limits_{i=\nu+\zeta-1}^{N-1} \mathcal{L}(\nu+\zeta-2,\nu,\zeta-1)\Gamma_{\bm{S}}(i,\nu+\zeta,d-\zeta) \mathbbm{1}_{\{\tau^{(N)} >i\}}\bigg| \mathcal{F}_{\nu-1},\tau^{(N)} \geq \nu \right] \\& \nonumber = \sup_{S^{(1)}[\nu+\zeta,\nu+d-1],\bm{S}^{(2)}[\nu+d,N-1]}  \mathbb{E}_\infty \left[  \sum\limits_{i=\nu+\zeta-1}^{N-1} \mathcal{L}(\nu+\zeta-1,\nu,\zeta)\Gamma_{\bm{S}}(i,\nu+\zeta,d-\zeta) \mathbbm{1}_{\{\tau^{(N)} >i\}}\bigg| \mathcal{F}_{\nu-1},\tau^{(N)} \geq \nu \right]  \\&= \sup_{S^{(1)}[\nu+\zeta,\nu+d-1],\bm{S}^{(2)}[\nu+d,N-1]}  \mathbb{E}_\infty \left[  \mathcal{L}(\nu+\zeta-1,\nu,\zeta)\Gamma_{\bm{S}}(\nu+\zeta-1,\nu+\zeta,d-\zeta) \mathbbm{1}_{\{\tau^{(N)} >\nu+\zeta-1\}}\bigg|  \nonumber \mathcal{F}_{\nu-1},\tau^{(N)} \geq \nu \right] \\&  \nonumber+ \sup_{S^{(1)}[\nu+\zeta,\nu+d-1],\bm{S}^{(2)}[\nu+d,N-1]}  \mathbb{E}_\infty \left[  \sum\limits_{i=\nu+\zeta}^{N-1} \mathcal{L}(\nu+\zeta-1,\nu,\zeta)\Gamma_{\bm{S}}(i,\nu+\zeta,d-\zeta) \mathbbm{1}_{\{\tau^{(N)} >i\}}\bigg| \mathcal{F}_{\nu-1},\tau^{(N)} \geq \nu \right] \\&=  \mathbb{E}_\infty \left[  \mathcal{L}(\nu+\zeta-1,\nu,\zeta) \mathbbm{1}_{\{\tau^{(N)} >\nu+\zeta-1\}}\bigg|  \nonumber \mathcal{F}_{\nu-1},\tau^{(N)} \geq \nu \right] \\&  + \sup_{S^{(1)}[\nu+\zeta,\nu+d-1],\bm{S}^{(2)}[\nu+d,N-1]}  \mathbb{E}_\infty \left[  \sum\limits_{i=\nu+\zeta}^{N-1} \mathcal{L}(\nu+\zeta-1,\nu,\zeta)\Gamma_{\bm{S}}(i,\nu+\zeta,d-\zeta) \mathbbm{1}_{\{\tau^{(N)} >i\}}\bigg| \mathcal{F}_{\nu-1},\tau^{(N)} \geq \nu \right] 
\end{align}
\normalsize
where (h) follows from bounding the sup by using the average as in \eqref{eq:eq_average_1}. Furthermore, since $\mathcal{L}(i,\nu,\zeta-1) = \mathcal{L}(i,\nu,\zeta)$ for $i\leq \nu+\zeta-2$ we have that
\begin{align}
\mathbb{E}_\infty \left[\sum\limits_{i=\nu}^{\nu+\zeta-2} \mathcal{L}(i,\nu,\zeta-1)\mathbbm{1}_{\{\tau^{(N)} >i\}} \bigg| \mathcal{F}_{\nu-1},\tau^{(N)} \geq \nu\right] = \mathbb{E}_\infty \left[\sum\limits_{i=\nu}^{\nu+\zeta-2} \mathcal{L}(i,\nu,\zeta)\mathbbm{1}_{\{\tau^{(N)} >i\}} \bigg| \mathcal{F}_{\nu-1},\tau^{(N)} \geq \nu\right]
\end{align}
which together with \eqref{eq:eqa} and \eqref{eq:eqb} establishes \eqref{eq:fact_1} by induction. 
Proceeding in a similar manner we can remove the sup by using the fact that the sup can be bounded by the average, as above. In particular, we establish by induction that for $1 \leq \zeta \leq N -\nu -d$ we have that
\begin{align}
\label{eq:fact_2}
B &\nonumber\geq \mathbb{E}_\infty \left[\sum\limits_{i=\nu}^{\nu+d+\zeta-1} \mathcal{L}(i,\nu,d)\mathbbm{1}_{\{\tau^{(N)} >i\}} \bigg| \mathcal{F}_{\nu-1},\tau^{(N)} \geq \nu\right] \\&+ \sup_{\bm{S}^{(2)}[\nu+d+\zeta,N-1]} \mathbb{E}_\infty \left[ \sum\limits_{i=\nu+d+\zeta}^{N-1} \mathcal{L}(\nu+d+\zeta-1,\nu,d)\Gamma_{\bm{S}}(i,\nu+d+\zeta,0) \mathbbm{1}_{\{\tau^{(N)} >i\}}\bigg| \mathcal{F}_{\nu-1},\tau^{(N)} \geq \nu \right].
\end{align}
To this end, note that by \eqref{eq:fact_1} for $\zeta=d$ we have that
\begin{align}
\label{eq:eqc}
B &\nonumber\geq \mathbb{E}_\infty \left[\sum\limits_{i=\nu}^{\nu+d-1} \mathcal{L}(i,\nu,d)\mathbbm{1}_{\{\tau^{(N)} >i\}} \bigg| \mathcal{F}_{\nu-1},\tau^{(N)} \geq \nu\right] \\&+ \sup_{\bm{S}^{(2)}[\nu+d,N-1]} \mathbb{E}_\infty \left[ \sum\limits_{i=\nu+d}^{N-1} \mathcal{L}(\nu+d-1,\nu,d)\Gamma_{\bm{S}}(i,\nu+d,0) \mathbbm{1}_{\{\tau^{(N)} >i\}}\bigg| \mathcal{F}_{\nu-1},\tau^{(N)} \geq \nu \right].
\end{align}
Analyzing the second term we have that for $(\ell,\ell') \in \text{comb}(L,2)$
\small
\begin{align}
&\sup_{\bm{S}^{(2)}[\nu+d,N-1]} \mathbb{E}_\infty \left[ \sum\limits_{i=\nu+d}^{N-1} \mathcal{L}(\nu+d-1,\nu,d)\Gamma_{\bm{S}}(i,\nu+d,0) \mathbbm{1}_{\{\tau^{(N)} >i\}}\bigg| \mathcal{F}_{\nu-1},\tau^{(N)} \geq \nu \right] \nonumber 
\\ \nonumber  
& =  \sup_{\bm{S}^{(2)}[\nu+d,N-1]} \mathbb{E}_\infty \left[ \sum\limits_{i=\nu+d}^{N-1} \mathcal{L}(\nu+d-1,\nu,d)\Gamma_{\bm{S}}(\nu+d,\nu+d,0)\Gamma_{\bm{S}}(i,\nu+d+1,0) \mathbbm{1}_{\{\tau^{(N)} >i\}}\bigg| \mathcal{F}_{\nu-1},\tau^{(N)} \geq \nu \right] \nonumber 
\\ \nonumber 
 & =  \sup_{\bm{S}^{(2)}[\nu+d+1,N-1]} \bigg\{ \sup_{\bm{S}^{(2)}[\nu+d]}  \mathbb{E}_\infty \bigg[ \frac{f(X_{S^{(2)}_1[\nu+d]}[\nu+d])f(X_{S^{(2)}_2[\nu+d]}[\nu+d])}{g(X_{S^{(2)}_1[\nu+d]}[\nu+d])g(X_{S^{(2)}_2[\nu+d]}[\nu+d])}\\& \sum\limits_{i=\nu+d}^{N-1} \mathcal{L}(\nu+d-1,\nu,d)\Gamma_{\bm{S}}(i,\nu+d+1,0) \mathbbm{1}_{\{\tau^{(N)} >i\}}\bigg| \mathcal{F}_{\nu-1},\tau^{(N)} \geq \nu \bigg]\bigg\} \nonumber \\& \geq \sup_{\bm{S}^{(2)}[\nu+d+1,N-1]} \bigg\{   \mathbb{E}_\infty \bigg[ \frac{f(X_{\ell}[\nu+d])f(X_{\ell'}[\nu+d])}{g(X_{\ell}[\nu+d])g(X_{\ell'}[\nu+d])}  \sum\limits_{i=\nu+d}^{N-1} \mathcal{L}(\nu+d-1,\nu,d)\Gamma_{\bm{S}}(i,\nu+d+1,0) \mathbbm{1}_{\{\tau^{(N)} >i\}}\bigg| \mathcal{F}_{\nu-1},\tau^{(N)} \geq \nu \bigg]\bigg\}.
\end{align}
\normalsize
By multiplying with $\frac{1}{\binom L2}$ and summing over $(\ell,\ell') \in \text{comb}(L,2)$ we have that 
\begin{align}
&\sup_{\bm{S}^{(2)}[\nu+d,N-1]} \mathbb{E}_\infty \left[ \sum\limits_{i=\nu+d}^{N-1} \mathcal{L}(\nu+d-1,\nu,d)\Gamma_{\bm{S}}(i,\nu+d,0) \mathbbm{1}_{\{\tau^{(N)} >i\}}\bigg| \mathcal{F}_{\nu-1},\tau^{(N)} \geq \nu \right] \nonumber 
\\&
 \geq \sum\limits_{(\ell, \ell') \,\in\, \text{comb}(L,2)}  \sup_{\bm{S}^{(2)}[\nu+d+1,N-1]} \bigg\{   \mathbb{E}_\infty \bigg[ \frac{1}{\binom L2}\frac{f(X_{\ell}[\nu+d])f(X_{\ell'}[\nu+d])}{g(X_{\ell}[\nu+d])g_{\ell'}(X_{\ell'}[\nu+d])} \nonumber \\& \sum\limits_{i=\nu+d}^{N-1} \mathcal{L}(\nu+d-1,\nu,d)\Gamma_{\bm{S}}(i,\nu+d+1,0) \mathbbm{1}_{\{\tau^{(N)} >i\}}\bigg| \mathcal{F}_{\nu-1},\tau^{(N)} \geq \nu \bigg]\bigg\}\nonumber  
\\& \geq   \sup_{\bm{S}^{(2)}[\nu+d+1,N-1]} \sum\limits_{(\ell, \ell') \,\in\, \text{comb}(L,2)} \bigg\{   \mathbb{E}_\infty \bigg[ \frac{1}{\binom L2}\frac{f(X_{\ell}[\nu+d])f(X_{\ell'}[\nu+d])}{g(X_{\ell}[\nu+d])g(X_{\ell'}[\nu+d])} \nonumber \\& \sum\limits_{i=\nu+d}^{N-1} \mathcal{L}(\nu+d-1,\nu,d)\Gamma_{\bm{S}}(i,\nu+d+1,0) \mathbbm{1}_{\{\tau^{(N)} >i\}}\bigg| \mathcal{F}_{\nu-1},\tau^{(N)} \geq \nu \bigg]\bigg\}\nonumber
\\& =  \sup_{\bm{S}^{(2)}[\nu+d+1,N-1]} \bigg\{   \mathbb{E}_\infty \bigg[ \sum\limits_{(\ell, \ell') \,\in \, \text{comb}(L,2)}  \left(\frac{1}{\binom L2}\frac{f(X_{\ell}[\nu+d])f(X_{\ell'}[\nu+d])}{g(X_{\ell}[\nu+d])g(X_{\ell'}[\nu+d])} \right) \nonumber \\& \sum\limits_{i=\nu+d}^{N-1} \mathcal{L}(\nu+d-1,\nu,d)\Gamma_{\bm{S}}(i,\nu+d+1,0) \mathbbm{1}_{\{\tau^{(N)} >i\}}\bigg| \mathcal{F}_{\nu-1},\tau^{(N)} \geq \nu \bigg]\bigg\}\nonumber
\\& =  \sup_{\bm{S}^{(2)}[\nu+d+1,N-1]} \bigg\{   \mathbb{E}_\infty \bigg[  \mathcal{L}\left(\nu+d,\nu+d,0\right)  \sum\limits_{i=\nu+d}^{N-1} \mathcal{L}(\nu+d-1,\nu,d)\Gamma_{\bm{S}}(i,\nu+d+1,0) \mathbbm{1}_{\{\tau^{(N)} >i\}}\bigg| \mathcal{F}_{\nu-1},\tau^{(N)} \geq \nu \bigg]\bigg\}\nonumber
\\& =   \sup_{\bm{S}^{(2)}[\nu+d+1,N-1]} \bigg\{   \mathbb{E}_\infty \bigg[  \sum\limits_{i=\nu+d}^{N-1} \mathcal{L}(\nu+d,\nu,d)\Gamma_{\bm{S}}(i,\nu+d+1,0)\mathbbm{1}_{\{\tau^{(N)} >i\}}\bigg| \mathcal{F}_{\nu-1},\tau^{(N)} \geq \nu \bigg]\bigg\}\nonumber
\\& =      \mathbb{E}_\infty \bigg[ \mathcal{L}(\nu+d,\nu,d)\mathbbm{1}_{\{\tau^{(N)} >\nu+d\}}\bigg| \mathcal{F}_{\nu-1},\tau^{(N)} \geq \nu \bigg]\nonumber
\\& +   \sup_{\bm{S}^{(2)}[\nu+d+1,N-1]} \bigg\{   \mathbb{E}_\infty \bigg[  \sum\limits_{i=\nu+d+1}^{N-1} \mathcal{L}(\nu+d,\nu,d)\Gamma_{\bm{S}}(i,\nu+d+1,0) \mathbbm{1}_{\{\tau^{(N)} >i\}}\bigg| \mathcal{F}_{\nu-1},\tau^{(N)} \geq \nu \bigg]\bigg\}
,
\end{align}
which together with \eqref{eq:eqc} implies the basis of the induction for \eqref{eq:fact_2}. Assume that the following equation holds for $ 3\leq\zeta\leq N - \nu -d$

\begin{align}
\label{eq:eqd}
B &\nonumber\geq \mathbb{E}_\infty \left[\sum\limits_{i=\nu}^{\nu+d+\zeta-2} \mathcal{L}(i,\nu,d)\mathbbm{1}_{\{\tau^{(N)} >i\}} \bigg| \mathcal{F}_{\nu-1},\tau^{(N)} \geq \nu\right] \\&+ \sup_{\bm{S}^{(2)}[\nu+d+\zeta-1,N-1]} \mathbb{E}_\infty \left[ \sum\limits_{i=\nu+d+\zeta-1}^{N-1} \mathcal{L}(\nu+d+\zeta-2,\nu,d)\Gamma_{\bm{S}}(i,\nu+d+\zeta-1,0) \mathbbm{1}_{\{\tau^{(N)} >i\}}\bigg| \mathcal{F}_{\nu-1},\tau^{(N)} \geq \nu \right].
\end{align}
By analyzing the second term we have that
\small
\begin{align}
& \nonumber \sup_{\bm{S}^{(2)}[\nu+d+\zeta-1,N-1]} \mathbb{E}_\infty \left[ \sum\limits_{i=\nu+d+\zeta-1}^{N-1} \mathcal{L}(\nu+d+\zeta-2,\nu,d)\Gamma_{\bm{S}}(i,\nu+d+\zeta-1,0) \mathbbm{1}_{\{\tau^{(N)} >i\}}\bigg| \mathcal{F}_{\nu-1},\tau^{(N)} \geq \nu \right]
\\& 
= \nonumber \sup_{\bm{S}^{(2)}[\nu+d+\zeta,N-1]}\bigg\{\sup_{\bm{S}^{(2)}[\nu+d+\zeta-1]}\mathbb{E}_\infty \bigg[
\\&
\nonumber \sum\limits_{i=\nu+d+\zeta-1}^{N-1} \mathcal{L}(\nu+d+\zeta-2,\nu,d)\Gamma_{\bm{S}}(\nu+d+\zeta-1,\nu+d+\zeta-1,0)\Gamma_{\bm{S}}(i,\nu+d+\zeta,0) \mathbbm{1}_{\{\tau^{(N)} >i\}}\bigg| \mathcal{F}_{\nu-1},\tau^{(N)} \geq \nu \bigg]\bigg\}
\\& 
\geq \nonumber \sup_{\bm{S}^{(2)}[\nu+d+\zeta,N-1]}\bigg\{\mathbb{E}_\infty \bigg[
\\&
\nonumber \sum\limits_{i=\nu+d+\zeta-1}^{N-1} \mathcal{L}(\nu+d+\zeta-2,\nu,d)\mathcal{L}(\nu+d+\zeta-1,\nu+d+\zeta-1,0)\Gamma_{\bm{S}}(i,\nu+d+\zeta,0) \mathbbm{1}_{\{\tau^{(N)} >i\}}\bigg| \mathcal{F}_{\nu-1},\tau^{(N)} \geq \nu \bigg]\bigg\}
\\& 
= \nonumber \sup_{\bm{S}^{(2)}[\nu+d+\zeta,N-1]}\bigg\{\mathbb{E}_\infty \bigg[
\nonumber \sum\limits_{i=\nu+d+\zeta-1}^{N-1} \mathcal{L}(\nu+d+\zeta-1,\nu,d)\Gamma_{\bm{S}}(i,\nu+d+\zeta,0) \mathbbm{1}_{\{\tau^{(N)} >i\}}\bigg| \mathcal{F}_{\nu-1},\tau^{(N)} \geq \nu \bigg]\bigg\}
\\& 
= \nonumber \mathbb{E}_\infty \bigg[
\nonumber  \mathcal{L}(\nu+d+\zeta-1,\nu,d)\mathbbm{1}_{\{\tau^{(N)} >\nu +d +\zeta-1\}}\bigg| \mathcal{F}_{\nu-1},\tau^{(N)} \geq \nu \bigg]
\\&+
  \sup_{\bm{S}^{(2)}[\nu+d+\zeta,N-1]}\bigg\{\mathbb{E}_\infty \bigg[
\sum\limits_{i=\nu+d+\zeta}^{N-1} \mathcal{L}(\nu+d+\zeta-1,\nu,d)\Gamma_{\bm{S}}(i,\nu+d+\zeta,0)\mathbbm{1}_{\{\tau^{(N)} >i\}}\bigg| \mathcal{F}_{\nu-1},\tau^{(N)} \geq \nu \bigg]\bigg\}
\end{align}
\normalsize
which together with \eqref{eq:eqd} implies \eqref{eq:fact_2} by induction. As a result by \eqref{eq:eqc} for $\zeta = N-\nu-d$ the following inequality follows:
\begin{align}
\label{eq:b_ineq}
B \geq \mathbb{E}_{\infty}\left[ \sum\limits_{i =\nu}^{N -1}  \mathcal{L}\left(i,\nu,d\right)  \mathbbm{1}_{\{i < \tau^{(N)} \}}  \bigg| \mathcal{F}_{\nu -1}, \tau^{(N)} \geq \nu\right],
\end{align}
when $d>0$. It can be easily shown that \eqref{eq:b_ineq} also holds when $d=0$. From \eqref{eq:b_ineq}, \eqref{eq:eqe} and \eqref{eq:eqf}, and by following similar steps as in \eqref{eq:eq_similar}, we then have that
\begin{align}
&\mathrm{WADD}_d(\tau) \geq  \mathbb{E}_{\infty}\left[  \mathbbm{1}_{\{\tau^{(N)} \geq \nu\}} \nonumber \bigg| \mathcal{F}_{\nu -1}, \tau^{(N)} \geq \nu\right]  +\mathbb{E}_{\infty}\left[ \sum\limits_{i =\nu}^{N -1}  \mathcal{L}\left(i,\nu,d\right)  \mathbbm{1}_{\{i < \tau^{(N)} \}}  \bigg| \mathcal{F}_{\nu -1}, \tau^{(N)} \geq \nu\right] 
 \\& = \mathbb{E}_{\infty}\left[  \mathbbm{1}_{\{\tau^{(N)} \geq \nu\}} \nonumber \bigg| \mathcal{F}_{\nu -1}, \tau^{(N)} \geq \nu\right]  + \mathbb{E}_{\infty}\left[ \sum\limits_{i =\nu+1}^{N }  \mathcal{L} \left(i-1,\nu,d\right) \mathbbm{1}_{\{i \leq \tau^{(N)} \}} \bigg| \mathcal{F}_{\nu -1}, \tau^{(N)} \geq \nu\right] 
 \\& 
 \nonumber  = \mathbb{E}_{\infty}\left[  \mathbbm{1}_{\{\tau^{(N)} \geq \nu\}} \mathcal{L}(\nu-1,\nu,d) \nonumber \bigg| \mathcal{F}_{\nu -1}, \tau^{(N)} \geq \nu\right]  + \mathbb{E}_{\infty}\left[ \sum\limits_{i =\nu+1}^{N }  \mathcal{L} \left(i-1,\nu,d\right) \mathbbm{1}_{\{i \leq \tau^{(N)} \}} \bigg| \mathcal{F}_{\nu -1}, \tau^{(N)} \geq \nu\right] 
   \\& 
 =  \mathbb{E}_{\infty}\left[ \sum\limits_{i =\nu}^{N }  \mathcal{L} \left(i-1,\nu,d\right) \mathbbm{1}_{\{i \leq \tau^{(N)} \}} \bigg| \mathcal{F}_{\nu -1}, \tau^{(N)} \geq \nu\right]  =   \sum\limits_{i =\nu}^{N } \mathbb{E}_{\infty}\left[ \mathcal{L} \left(i-1,\nu,d\right) \mathbbm{1}_{\{i \leq \tau^{(N)} \}} \bigg| \mathcal{F}_{\nu -1}, \tau^{(N)} \geq \nu\right]   \nonumber
   \\&  
    =\sum\limits_{i =\nu}^{N } \overline{\mathbb{E}}_{\nu,d}\left[  \mathbbm{1}_{\{i \leq \tau^{(N)} \}} \bigg| \mathcal{F}_{\nu -1}, \tau^{(N)} \geq \nu\right]    =  \overline{\mathbb{E}}_{\nu,d}\left[ \sum\limits_{i =\nu}^{N } \mathbbm{1}_{\{i \leq \tau^{(N)} \}} \bigg| \mathcal{F}_{\nu -1}, \tau^{(N)} \geq \nu\right] \nonumber    
    \\& = \overline{\mathbb{E}}_{\nu,d}\left[ \sum\limits_{i =\nu}^{\infty } \mathbbm{1}_{\{i \leq \tau^{(N)} \}} \bigg| \mathcal{F}_{\nu -1}, \tau^{(N)} \geq \nu\right] =  \overline{\mathbb{E}}_{\nu,d}[\tau^{(N)} - \nu +1 | \mathcal{F}_{\nu -1}, \tau^{(N)} \geq \nu].
\end{align}
 From the Monotone Convergence Theorem, since $\tau^{(N)}-\nu+1$ and $\mathbbm{1}_{\{\tau^{(N)}\geq \nu\}}$ are non-decreasing with $N$, we have that
\begin{align}
\label{eq:eq28}
 &\lim_{N \rightarrow \infty} \overline{\mathbb{E}}_{\nu,d}[\tau^{(N)} - \nu +1 | \mathcal{F}_{\nu -1}, \tau^{(N)} \geq \nu] = \lim_{N \rightarrow \infty}\frac{\overline{\mathbb{E}}_{\nu,d} \left[(\tau^{(N)} - \nu +1)\mathbbm{1}_{\{\tau^{(N)}\geq \nu\}}  | \mathcal{F}_{\nu -1} \right]}{\overline{\mathbb{E}}_{\nu,d} \left[\mathbbm{1}_{\{\tau^{(N)}\geq \nu\}} | \mathcal{F}_{\nu -1} \right] }\nonumber\\&= \frac{\overline{\mathbb{E}}_{\nu,d} \left[\lim_{N \rightarrow \infty}(\tau^{(N)} - \nu +1)\mathbbm{1}_{\{\tau^{(N)}\geq \nu\}}  | \mathcal{F}_{\nu -1} \right]}{\overline{\mathbb{E}}_{\nu,d} \left[\lim_{N \rightarrow \infty}\mathbbm{1}_{\{\tau^{(N)}\geq \nu\}} | \mathcal{F}_{\nu -1} \right] }  =\frac{\overline{\mathbb{E}}_{\nu,d} \left[(\tau - \nu +1 )\mathbbm{1}_{\{\tau\geq \nu\}} | \mathcal{F}_{\nu -1} \right]}{\overline{\mathbb{E}}_{\nu,d} \left[\mathbbm{1}_{\{\tau\geq \nu\}} | \mathcal{F}_{\nu -1}\right] }\nonumber \\&=\overline{\mathbb{E}}_{\nu,d} \left[\tau - \nu +1 |\mathcal{F}_{\nu -1}, \tau^{(N)} \geq \nu \right].
\end{align}
As a result, by taking the $\sup$ over $\nu$ and the ess $\sup$ we have that for any stopping time $\tau$ and for $d\geq 0$
\begin{align}
\label{eq:WADD_ineq}
\mathrm{WADD}_d(\tau) \geq \overline{\mathrm{WADD}}_d(\tau).
\end{align}
\end{proof}

From Lemma 1 and the universal lower bound in \cite{Zou2017} we then have the following theorem:
\begin{theorem}
Consider the QCD problem described by \eqref{eq:stat_model} and \eqref{eq:WADD_growing}. Assume that for $c \geq 0$
\begin{align}
\label{eq:d_scaling}
d \sim c \frac{\log \gamma}{I_{1}}
\end{align}
as $\gamma \rightarrow \infty$.
Then, we have that

\begin{align}
\inf_{\tau \in C_\gamma} \mathrm{WADD}_d(\tau)  \geq \left\{
\begin{array}{ll}
      \frac{\log\gamma}{I_{1}}(1-o(1)) & c\geq 1\\
      \log \gamma \left(\frac{1-c}{I_{2}}+\frac{c}{I_{1}} \right)(1-o(1))& c <1 \\
\end{array} 
\right. 
\end{align}
as $\gamma \rightarrow \infty$.
\begin{proof}
From \eqref{eq:WADD_ineq} we have that for any $d\geq 0$
\begin{align}
\label{eq:eql}
\inf_{\tau \in C_\gamma} \mathrm{WADD}_d(\tau) \geq \inf_{\tau \in C_\gamma} \overline{\mathrm{WADD}}_d(\tau).
\end{align}
The proof then follows directly by using \eqref{eq:eql} and the lower bound for the instance of the transient QCD problem \cite{Zou2017} described in \eqref{eq:stat_model}.
\end{proof}
\end{theorem}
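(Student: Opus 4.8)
The plan is to derive the bound from two facts already available in the paper: Lemma~\ref{thm:lemma_1}, which dominates the worst-path delay of any stopping rule by its delay under the randomized observation model, and the universal asymptotic lower bound for transient QCD established in \cite{Zou2017}. The structural point that makes the reduction clean is that the false-alarm constraint $C_\gamma=\{\tau:\mathbb{E}_\infty[\tau]\geq\gamma\}$ depends only on $\mathbb{P}_\infty$, which is identical in the worst-path model \eqref{eq:stat_model} and in the randomized model \eqref{eq:random_stat_model_2}; hence the comparison inequality survives taking the infimum over $C_\gamma$.

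First I would apply Lemma~\ref{thm:lemma_1}: for every stopping rule $\tau$ and every $d\geq0$, $\mathrm{WADD}_d(\tau)\geq\overline{\mathrm{WADD}}_d(\tau)$. Taking the infimum of both sides over $\tau\in C_\gamma$ gives
\begin{align}
\inf_{\tau\in C_\gamma}\mathrm{WADD}_d(\tau) \geq \inf_{\tau\in C_\gamma}\overline{\mathrm{WADD}}_d(\tau),
\end{align}
so it is enough to lower-bound the worst-case delay of the QCD problem in which the anomalous nodes are re-drawn uniformly at random at every time instant.

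Next I would identify that randomized problem as a two-phase transient QCD instance in the sense of \cite{Zou2017}: pre-change density $g$; a transient Phase~1 of duration $d$ with post-change density $\overline{p}^{(1)}$ and KL divergence number $I_1$; and a persistent Phase~2 with post-change density $\overline{p}^{(2)}$ and KL divergence number $I_2$. Applying the \cite{Zou2017} universal lower bound under the scaling \eqref{eq:d_scaling}, $d\sim c\log\gamma/I_1$, splits into two regimes. If $c\geq1$, the change is necessarily detected within Phase~1, where the drift of the log-likelihood ratio is $I_1$, so about $\log\gamma/I_1$ samples are unavoidable and the bound is $(\log\gamma/I_1)(1-o(1))$. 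If $c<1$, the $d$ samples of Phase~1 contribute only a fraction $c$ of the $\log\gamma$ log-likelihood budget needed for detection; the remaining $(1-c)\log\gamma$ must be accrued at rate $I_2$ in the persistent phase, costing an additional $(1-c)\log\gamma/I_2$ samples, giving the bound $\log\gamma\big(c/I_1+(1-c)/I_2\big)(1-o(1))$. In the language of the general theorem of Sec.~\ref{sec:Main_results} with $m=1$, $n=2$, this is just the statement that the threshold phase is $h=1$ when $c\geq1$ and $h=2$ when $c<1$. Combined with the previous display this is exactly the claimed inequality, and the degenerate case $d=0$ is recovered as the $c=0$ limit of the second branch.

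The main obstacle --- in fact the only delicate point --- is verifying that the randomized model meets the regularity hypotheses under which the \cite{Zou2017} lower bound applies: that $I_1,I_2\in(0,\infty)$ and that the per-phase log-likelihood increments satisfy the (uniform) integrability conditions invoked there; for the Gaussian mean-shift example of Sec.~\ref{sec:Numerical} these are immediate. Beyond that, one has to match the duration-scaling convention \eqref{eq:d_scaling} to the form used in \cite{Zou2017} and read off the threshold index $h$ correctly, after which the rest is bookkeeping.
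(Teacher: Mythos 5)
Your proposal is correct and takes essentially the same route as the paper's proof: apply Lemma~\ref{thm:lemma_1} to dominate $\mathrm{WADD}_d$ by $\overline{\mathrm{WADD}}_d$, pass to the infimum over $\mathcal{C}_\gamma$ (valid because the false-alarm constraint depends only on $\mathbb{P}_\infty$, which is shared by both models), and then invoke the universal lower bound for transient QCD from \cite{Zou2017} applied to the two-phase mixture model. Your elaboration of why the infimum step is legitimate, and your sanity-check of the two regimes $c\geq 1$ and $c<1$, are correct and merely make explicit what the paper leaves implicit.
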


To establish the asymptotic optimality of the proposed mixture-WD-CuSum test we have to show that equality in \eqref{eq:lemma_1_eq} is attained when $\tau = \tau_W$. We continue with the proof of Lemma 2.
\begin{proof}[Proof of Lemma 2]
 Note that due to the symmetry of the test, the delay of $\tau_W$ is independent of $\bm{S}$. Furthermore, because of the structure of the test the worst case delay is achieved at $\nu=1$. As a result, we have that for any $\bm{S}$, $d$
\begin{align}
\mathrm{WADD}_d(\tau_W) & =  \sup_{\bm {S}} \sup_{\nu \geq 1} \mathrm{ess}\hspace{-0.6pt} \sup \mathbb{E}_{\nu,d}^{\bm{S}}[\tau_W - \nu +1 | \mathcal{F}_{\nu -1}, \tau_W \geq \nu] = \sup_{\nu \geq 1} \mathrm{ess}\hspace{-0.6pt} \sup \mathbb{E}_{\nu,d}^{\bm{S}}[\tau_W - \nu +1 | \mathcal{F}_{\nu -1}, \tau_W \geq \nonumber \nu] \\&= \mathbb{E}_{1,d}^{\bm{S}}[\tau_W ]  = \lim\limits_{N \rightarrow \infty} \sum\limits_{i=1}^N i \mathbb{P}_{1,d}^{\bm{S}}( \tau_W = i).
\end{align}
Analyzing the summation in the limit for $d>0$ and $N>d$ we have
\begin{align}
\label{eq:eqk}
\sum\limits_{i=1}^N i \mathbb{P}_{1,d}^{\bm{S}}\left( \tau_W = i\right) = \sum\limits_{i=1}^N i \mathbb{E}_{1,d}^{\bm{S}}\left[\mathbbm{1}_{\{\tau_W = i\}}\right]= \sum\limits_{i=1}^N i \mathbb{E}_{\infty}\left[\Gamma_{\bm{S}}(i,1,d)\mathbbm{1}_{\{\tau_W = i\}} \right].
\end{align}
Following, we show that for $1 \leq \zeta \leq d$ we have that
\begin{align}
\label{eq:eqg}
\sum\limits_{i=1}^N i \mathbb{E}_{\infty}\left[\Gamma_{\bm{S}}(i,1,d)\mathbbm{1}_{\{\tau_W = i\}} \right] = \sum\limits_{i=1}^\zeta i \mathbb{E}_\infty \left[ \mathcal{L}\left(i,1,\zeta\right) \mathbbm{1}_{\{\tau_W=i\}} \right] + \sum_{i= \zeta +1}^N i \mathbb{E}_\infty \left[ \mathcal{L}\left(\zeta,1,\zeta\right) \Gamma_{\bm{S}}\left(i,\zeta+1,d-\zeta \right) \mathbbm{1}_{\{\tau_W=i\}}\right].
\end{align}
We initially prove the claim for the case of $\zeta=1$. In particular, we have that
\begin{align}
\sum\limits_{i=1}^N i \mathbb{E}_{\infty}\left[\Gamma_{\bm{S}}(i,1,d)\mathbbm{1}_{\{\tau_W = i\}} \right]& = \sum\limits_{i=1}^N i \mathbb{E}_{\infty}\left[\Gamma_{ \bm{S}}(1,1,1)\Gamma_{\bm{S}}(i,2,d-1)\mathbbm{1}_{\{\tau_W = i\}} \right] \nonumber \\& = \sum\limits_{i=1}^N i \mathbb{E}_{\infty}\left[\log \frac{f(X_{S^{(1)}[1]}[1])}{g(X_{S^{(1)}[1]}[1])} \Gamma_{\bm{S}}(i,2,d-1)\mathbbm{1}_{\{\tau_W = i\}} \right].
\end{align}
Note that for all $\ell \in [L]$
\begin{align}
\frac{f(X_{S^{(1)}[1]}[1])}{g(X_{S^{(1)}[1]}[1])} \stackrel{d}{=} \frac{f(X_{\ell}[1])}{g(X_{\ell}[1])}
\end{align}
under $f_\infty(\cdot)$, which implies that for all $\ell \in [L]$
\begin{align}
\sum\limits_{i=1}^N i \mathbb{E}_{\infty}\left[\log \frac{f(X_{S^{(1)}[1]}[1])}{g(X_{S^{(1)}[1]}[1])} \Gamma_{ \bm{S}}(i,2,d-1)\mathbbm{1}_{\{\tau_W = i\}} \right] = \sum\limits_{i=1}^N i \mathbb{E}_{\infty}\left[\log \frac{f(X_{\ell}[1])}{g(X_{\ell}[1])} \Gamma_{ \bm{S}}(i,2,d-1)\mathbbm{1}_{\{\tau_W = i\}} \right]
\end{align}
which in turn, by averaging over the location of the anomalies, implies that
\small
\begin{align}
&\sum\limits_{i=1}^N i \mathbb{E}_{\infty}\left[\Gamma_{\bm{S}}(i,1,d)\mathbbm{1}_{\{\tau_W = i\}} \right]  = \sum\limits_{\ell=1}^L \frac{1}{L} \left( \sum\limits_{i=1}^N i \mathbb{E}_{\infty}\left[\Gamma_{\bm{S}}(i,1,d)\mathbbm{1}_{\{\tau_W = i\}} \right]\right)  \nonumber \\& = \sum\limits_{\ell=1}^L \frac{1}{L} \left(\sum\limits_{i=1}^N i \mathbb{E}_{\infty}\left[\log \frac{f(X_{\ell}[1])}{g(X_{\ell}[1])} \Gamma_{ \bm{S}}(i,2,d-1)\mathbbm{1}_{\{\tau_W = i\}} \right]\right) \nonumber \\& = \sum\limits_{i=1}^N i \mathbb{E}_{\infty}\left[ \sum\limits_{\ell=1}^L \left( \frac{1}{L}\log \frac{f(X_{\ell}[1])}{g(X_{\ell}[1])}\right)  \Gamma_{\bm{S}}(i,2,d-1)\mathbbm{1}_{\{\tau_W = i\}} \right] \nonumber = \sum\limits_{i=1}^N i \mathbb{E}_{\infty}\left[\mathcal{L}(1,1,1) \Gamma_{\bm{S}}(i,2,d-1)\mathbbm{1}_{\{\tau_W = i\}} \right]  \\&=  \mathbb{E}_{\infty}\left[\mathcal{L}(1,1,1)\mathbbm{1}_{\{\tau_W = 1\}} \right]   + \sum\limits_{i=2}^N i \mathbb{E}_{\infty}\left[\mathcal{L}(1,1,1) \Gamma_{\bm{S}}(i,2,d-1)\mathbbm{1}_{\{\tau_W = i\}} \right]  
\end{align}
\normalsize
establishing the basis of the induction. Assume that the following equation holds for $3 \leq \zeta \leq d$
\begin{align}
\sum\limits_{i=1}^N i \mathbb{E}_{\infty}\left[\Gamma_{ \bm{S}}(i,1,d)\mathbbm{1}_{\{\tau_W = i\}} \right] &= \sum\limits_{i=1}^{\zeta-1} i \mathbb{E}_\infty \left[ \mathcal{L}\left(i,1,\zeta-1\right) \mathbbm{1}_{\{\tau_W=i\}} \right] \nonumber\\&+ \sum_{i= \zeta }^N i \mathbb{E}_\infty \left[ \mathcal{L}\left(\zeta-1,1,\zeta-1\right) \Gamma_{\bm{S}}\left(i,\zeta,d-\zeta+1 \right) \mathbbm{1}_{\{\tau_W=i\}}\right].
\end{align}
We then have that
\small
\begin{align}
\sum_{i= \zeta }^N i \mathbb{E}_\infty \left[ \mathcal{L}\left(\zeta-1,1,\zeta-1\right) \Gamma_{ \bm{S}}\left(i,\zeta,d-\zeta+1 \right) \mathbbm{1}_{\{\tau_W=i\}}\right] = \sum_{i= \zeta }^N i \mathbb{E}_\infty \left[ \mathcal{L}\left(\zeta-1,1,\zeta-1\right)\Gamma_{\bm{S}}\left(\zeta,\zeta,1 \right) \Gamma_{\bm{S}}\left(i,\zeta+1,d-\zeta\right) \mathbbm{1}_{\{\tau_W=i\}}\right]
\end{align}
\normalsize
which implies that
\begin{align}
&\sum\limits_{i=1}^N i \mathbb{E}_{\infty}\left[\Gamma_{ \bm{S}}(i,1,d)\mathbbm{1}_{\{\tau_W = i\}} \right] = \sum\limits_{\ell=1}^L \frac{1}{L}\left(\sum\limits_{i=1}^N i \mathbb{E}_{\infty}\left[\Gamma_{\bm{S}}(i,1,d)\mathbbm{1}_{\{\tau_W = i\}} \right] \right)= \nonumber\\&=\sum\limits_{i=1}^{\zeta-1} i \mathbb{E}_\infty \left[ \mathcal{L}\left(i,1,\zeta-1\right) \mathbbm{1}_{\{\tau_W=i\}} \right] + \sum_{i= \zeta }^N i \mathbb{E}_\infty \left[ \mathcal{L}\left(\zeta-1,1,\zeta-1\right)\mathcal{L}\left(\zeta,\zeta,1 \right) \Gamma_{\bm{S}}\left(i,\zeta+1,d-\zeta\right) \mathbbm{1}_{\{\tau_W=i\}}\right]\nonumber \\&= \sum\limits_{i=1}^{\zeta-1} i \mathbb{E}_\infty \left[ \mathcal{L}\left(i,1,\zeta\right) \mathbbm{1}_{\{\tau_W=i\}} \right] + \sum_{i= \zeta }^N i \mathbb{E}_\infty \left[ \mathcal{L}\left(\zeta,1,\zeta\right)\Gamma_{\bm{S}}\left(i,\zeta+1,d-\zeta\right) \mathbbm{1}_{\{\tau_W=i\}}\right] \nonumber \\&= \sum\limits_{i=1}^{\zeta} i \mathbb{E}_\infty \left[ \mathcal{L}\left(i,1,\zeta\right) \mathbbm{1}_{\{\tau_W=i\}} \right] + \sum_{i= \zeta+1 }^N i \mathbb{E}_\infty \left[ \mathcal{L}\left(\zeta,1,\zeta\right)\Gamma_{ \bm{S}}\left(i,\zeta+1,d-\zeta\right) \mathbbm{1}_{\{\tau_W=i\}}\right]
\end{align}
which proves \eqref{eq:eqg} by induction. Proceeding in a similar way by averaging over the double anomaly we can easily establish that for $2 \leq \zeta \leq N-d+1$ we have that
\begin{align}
\label{eq:eqh}
\sum\limits_{i=1}^N i \mathbb{E}_{\infty}\left[\Gamma_{ \bm{S}}(i,1,d)\mathbbm{1}_{\{\tau_W = i\}} \right] &= \sum\limits_{i=1}^{d+\zeta-1} i \mathbb{E}_\infty \left[ \mathcal{L}\left(i,1,d\right) \mathbbm{1}_{\{\tau_W=i\}} \right] \nonumber\\&+ \sum_{i= d+\zeta}^N i \mathbb{E}_\infty \left[ \mathcal{L}\left(d+\zeta-1,1,d\right)\Gamma_{ \bm{S}}\left(i,d+\zeta,0\right) \mathbbm{1}_{\{\tau_W=i\}}\right].
\end{align}
In particular, from eq. \eqref{eq:eqg} for $\zeta = d$ we have that
\begin{align}
\label{eq:eqi}
\sum\limits_{i=1}^N i \mathbb{E}_{\infty}\left[\Gamma_{\bm{S}}(i,1,d)\mathbbm{1}_{\{\tau_W = i\}} \right] &= \sum\limits_{i=1}^d i \mathbb{E}_\infty \left[ \mathcal{L}\left(i,1,d\right) \mathbbm{1}_{\{\tau_W=i\}} \right] \nonumber \\&+ \sum_{i= d +1}^N i \mathbb{E}_\infty \left[ \mathcal{L}\left(d,1,d\right) \Gamma_{ \bm{S}}\left(i,d+1,0\right) \mathbbm{1}_{\{\tau_W=i\}}\right].
\end{align}
We then have for the second term that for all $(\ell,\ell') \in \text{comb}(L,2)$
\begin{align}
&\sum_{i= d +1}^N i \mathbb{E}_\infty \left[ \mathcal{L}\left(d,1,d\right) \Gamma_{ \bm{S}}\left(i,d+1,0\right) \mathbbm{1}_{\{\tau_W=i\}}\right] = \sum_{i= d +1}^N i \mathbb{E}_\infty \left[ \mathcal{L}\left(d,1,d\right) \Gamma_{ \bm{S}}\left(d+1,d+1,0\right)\Gamma_{\bm{S}}\left(i,d+2,0\right) \mathbbm{1}_{\{\tau_W=i\}}\right] \nonumber
\\&
= \sum_{i= d +1}^N i \mathbb{E}_\infty \left[ \mathcal{L}\left(d,1,d\right)  \frac{f(X_{\ell}[d+1])f(X_{\ell'}[d+1])}{g(X_{\ell}[d+1])g(X_{\ell'}[d+1])} \Gamma_{\bm{S}}\left(i,d+2,0\right) \mathbbm{1}_{\{\tau_W=i\}}\right].
\end{align}
By averaging over the double anomaly we have that
\begin{align}
&\nonumber\sum_{i= d +1}^N i \mathbb{E}_\infty \left[ \mathcal{L}\left(d,1,d\right) \Gamma_{ \bm{S}}\left(i,d+1,0\right) \mathbbm{1}_{\{\tau_W=i\}}\right] \\&\nonumber = \sum\limits_{(\ell, \ell') \,\in \, \text{comb}(L,2)}\sum_{i= d +1}^N i \mathbb{E}_\infty \left[ \mathcal{L}\left(d,1,d\right)  \frac{1}{\binom L2}\frac{f(X_{\ell}[d+1])f(X_{\ell'}[d+1])}{g(X_{\ell}[d+1])g(X_{\ell'}[d+1])} \Gamma_{ \bm{S}}\left(i,d+2,0\right) \mathbbm{1}_{\{\tau_W=i\}}\right]
\\&\nonumber = \sum_{i= d +1}^N i \mathbb{E}_\infty \left[ \mathcal{L}\left(d,1,d\right) \left(\sum\limits_{(\ell, \ell') \in \text{comb}(L,2)}\frac{1}{\binom L2}\frac{f(X_{\ell}[d+1])f(X_{\ell'}[d+1])}{g(X_{\ell}[d+1])g(X_{\ell'}[d+1])} \right)\Gamma_{ \bm{S}}\left(i,d,0\right) \mathbbm{1}_{\{\tau_W=i\}}\right]
\\&\nonumber = \sum_{i= d +1}^N i \mathbb{E}_\infty \left[ \mathcal{L}\left(d,1,d\right) \mathcal{L}\left(d+1,d+1,0\right)\Gamma_{ \bm{S}}\left(i,d+2,0\right)  \mathbbm{1}_{\{\tau_W=i\}}\right]
\\&\nonumber = \sum_{i= d +1}^N i \mathbb{E}_\infty \left[ \mathcal{L}\left(d+1,1,d\right) \Gamma_{ \bm{S}}\left(i,d+2,0\right)  \mathbbm{1}_{\{\tau_W=i\}}\right]
\\&\nonumber = (d+1) \mathbb{E}_\infty \left[ \mathcal{L}\left(d+1,1,d\right)   \mathbbm{1}_{\{\tau_W=d+1\}}\right]+\sum_{i= d +2}^N i \mathbb{E}_\infty \left[ \mathcal{L}\left(d+1,1,d\right) \Gamma_{\bm{S}}\left(i,d+2,0\right)  \mathbbm{1}_{\{\tau_W=i\}}\right].
\end{align}
which together with \eqref{eq:eqi} establishes \eqref{eq:eqh} for $\zeta=2$. Assume that the following equation holds for $4 \leq \zeta \leq N-d+1$
\begin{align}
\label{eq:eqj}
\sum\limits_{i=1}^N i \mathbb{E}_{\infty}\left[\Gamma_{\bm{S}}(i,1,d)\mathbbm{1}_{\{\tau_W = i\}} \right] &= \sum\limits_{i=1}^{d+\zeta-2} i \mathbb{E}_\infty \left[ \mathcal{L}\left(i,1,d\right) \mathbbm{1}_{\{\tau_W=i\}} \right]\nonumber \\& + \sum_{i= d+\zeta-1}^N i \mathbb{E}_\infty \left[ \mathcal{L}\left(d+\zeta-2,1,d\right)\Gamma_{\bm{S}}\left(i,d+\zeta-1,0\right) \mathbbm{1}_{\{\tau_W=i\}}\right].
\end{align}
Analyzing the second term we have
\begin{align}
 &\sum_{i= d+\zeta-1}^N i \mathbb{E}_\infty \left[ \mathcal{L}\left(d+\zeta-2,1,d\right)\Gamma_{\bm{S}}\left(i,d+\zeta-1,0\right) \mathbbm{1}_{\{\tau_W=i\}}\right] \nonumber
 \\&=
 \nonumber 
  \sum_{i= d+\zeta-1}^N i \mathbb{E}_\infty \left[ \mathcal{L}\left(d+\zeta-2,1,d\right)\Gamma_{\bm{S}}\left(d+\zeta-1,d+\zeta-1,0\right)\Gamma_{\bm{S}}\left(i,d+\zeta,0\right) \mathbbm{1}_{\{\tau_W=i\}}\right] 
   \\&=
 \nonumber 
  \sum_{i= d+\zeta-1}^N i \mathbb{E}_\infty \left[ \mathcal{L}\left(d+\zeta-2,1,d\right)\mathcal{L}\left(d+\zeta-1,d+\zeta-1,0\right) \Gamma_{\bm{S}}\left(i,d+\zeta,0\right) \mathbbm{1}_{\{\tau_W=i\}}\right] 
  \\&=
  \sum_{i= d+\zeta-1}^N i \mathbb{E}_\infty \left[ \mathcal{L}\left(d+\zeta-1,1,d\right) \Gamma_{\bm{S}}\left(i,d+\zeta,0\right) \mathbbm{1}_{\{\tau_W=i\}}\right] \nonumber
   \\&=
   (d+\zeta-1) \mathbb{E}_\infty \left[ \mathcal{L}\left(d+\zeta-1,1,d\right)  \mathbbm{1}_{\{\tau_W=d+\zeta-1\}}\right] + 
  \sum_{i= d+\zeta}^N i \mathbb{E}_\infty \left[ \mathcal{L}\left(d+\zeta-1,1,d\right) \Gamma_{\bm{S}}\left(i,d+\zeta,0\right) \mathbbm{1}_{\{\tau_W=i\}}\right] 
 ,
\end{align}
which together with \eqref{eq:eqj} implies \eqref{eq:eqh}. For $\zeta = N-d+1$ it is easy to see that \eqref{eq:eqh} then implies
\begin{align}
\label{eq:eqm}
\sum\limits_{i=1}^N i \mathbb{E}_{\infty}\left[\Gamma_{\bm{S}}(i,1,d)\mathbbm{1}_{\{\tau_W = i\}} \right] =\sum\limits_{i=1}^N i \mathbb{E}_{\infty}\left[\mathcal{L}(i,1,d)\mathbbm{1}_{\{\tau_W = i\}} \right]
\end{align}
for $d>0$. Following the same process we can prove the same equality when $d=0$. As a result, we have by \eqref{eq:eqk} and \eqref{eq:eqm} that for $d\geq 0$
\begin{align}
\sum\limits_{i=1}^N i \mathbb{P}_{1,d}^{\bm{S}}\left( \tau_W = i\right) =  \sum\limits_{i=1}^N i \mathbb{E}_{\infty}\left[\mathcal{L}(i,1,d)\mathbbm{1}_{\{\tau_W = i\}} \right] = \sum\limits_{i=1}^N i \overline{\mathbb{E}}_{1,d}\left[\mathbbm{1}_{\{\tau_W = i\}}\right] =\sum\limits_{i=1}^N i \overline{\mathbb{P}}_{1,d}\left(\tau_W = i\right),
\end{align}
which in turn, by taking the limit as $N \rightarrow \infty$ implies that for all $\bm{S}$, $\bm{d}$ we have that
\begin{align}
\mathrm{WADD}_d(\tau_W) = \mathbb{E}^{\bm{S}}_{1,d}[\tau_W] =  \overline{\mathbb{E}}_{1,d}[\tau_W] \stackrel{(\text{i})}{=} \overline{\mathrm{WADD}}_d(\tau_W),
\end{align}
where (i) follows since the WD-CuSum attains the $\overline{\mathrm{WADD}}_d$ at $\nu=1$.
\end{proof}

Using Lemma 2 and the upper bound on the delay of the WD-CuSum procedure \cite{Zou2017} we have the following theorem:
\begin{theorem}
Consider the stopping time specified by \eqref{eq:stat_WD} - \eqref{eq:stop_WD}. Assume that there exists a constant $c' \geq 0$ such that
\begin{align}
d\sim c' \frac{b}{I_{1}}
\end{align}
as $b \rightarrow \infty$.
Furthermore, assume that
\begin{align}
\rho_1 \rightarrow 0 \text{    and    } \frac{\log \rho_1 }{b} \rightarrow 0
\end{align}
as $b \rightarrow \infty$. We then have that for all $d\geq 0$
\begin{align}
 \mathrm{WADD}_d(\tau_W)  \leq \left\{
\begin{array}{ll}
      \frac{b}{I_1}(1+o(1)) & c'\geq 1\\
      b \left(\frac{1-c'}{I_2}+\frac{c'}{I_1} \right)(1+o(1))& c' <1 \\
\end{array} 
\right. 
\end{align}
as $b \rightarrow \infty$.
\begin{proof}
The result follows directly from Lemma 3 and the asymptotic upper bound on the delay of the WD-CuSum \cite{Zou2017}.
\end{proof}
\end{theorem}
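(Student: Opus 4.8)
The plan is to reduce the statement to the known performance analysis of the WD-CuSum procedure of \cite{Zou2017}, using the identification of $\tau_W$ established in Lemma \ref{thm:lemma_2}. First I would observe that, by construction, the statistic $W[k]$ in \eqref{eq:stat_WD}--\eqref{eq:stat_WD_2} and the stopping rule $\tau_W$ in \eqref{eq:stop_WD} are precisely the WD-CuSum statistic and stopping time of \cite{Zou2017} for the two-phase transient change-detection problem in which the pre-change density is $g$, the phase-$1$ post-change density is the mixture $\overline{p}^{(1)}$, the phase-$2$ (persistent) density is the mixture $\overline{p}^{(2)}$, the transient-phase duration is $d$, and the mixture weight attached to the transient phase is governed by $\rho_1$. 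Consequently $\overline{\mathrm{WADD}}_d(\tau_W)$ is exactly the worst-case Lorden delay of WD-CuSum on that problem, attained at $\nu=1$, and Lemma \ref{thm:lemma_2} gives $\mathrm{WADD}_d(\tau_W)=\overline{\mathrm{WADD}}_d(\tau_W)$; hence it suffices to upper bound $\overline{\mathrm{WADD}}_d(\tau_W)$ by invoking the delay bound of \cite{Zou2017}.

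Second, I would verify that the standing hypotheses of that upper bound hold in the present mixture setting. These are: (i) the KL numbers $I_1=\mathbb{E}_{\overline{p}^{(1)}}[\log(\overline{p}^{(1)}/g)]$ and $I_2=\mathbb{E}_{\overline{p}^{(2)}}[\log(\overline{p}^{(2)}/g)]$ are strictly positive and finite, which holds whenever $f$ and $g$ are mutually absolutely continuous with finite KL divergence, as in the Gaussian case of Sec.~\ref{sec:Numerical}; (ii) the regularity/$r$-quick convergence condition of \cite{Zou2017} on the per-sample log-likelihood-ratio increments $\log(\overline{p}^{(i)}(\bm{X}[k])/g(\bm{X}[k]))$ needed to promote the renewal-theoretic delay estimate to a first-order bound; (iii) the transient duration obeys $d\sim c'\,b/I_1$; and (iv) the transient mixture weight is asymptotically negligible in the sense $\rho_1\to 0$ and $\log\rho_1/b\to 0$. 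Items (iii)--(iv) are exactly the theorem's assumptions, while (i)--(ii) are mild conditions on $(f,g)$.

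Third, with these conditions in place the upper bound of \cite{Zou2017} applies verbatim: the worst-case delay of WD-CuSum with threshold $b$ is at most $\frac{b}{I_1}(1+o(1))$ when the transient phase is long enough for detection to complete within it (the case $c'\ge 1$), and at most $b\big(\frac{c'}{I_1}+\frac{1-c'}{I_2}\big)(1+o(1))$ when detection spills over into the persistent phase (the case $c'<1$). These two regimes are precisely the $h=1$ and $h=2$ specializations of the phase-splitting formula of the main theorem in Sec.~\ref{sec:Main_results} with $m=1$, $n=2$, $c_1=c'$, $c_2=\infty$. Substituting $\overline{\mathrm{WADD}}_d(\tau_W)=\mathrm{WADD}_d(\tau_W)$ from Lemma \ref{thm:lemma_2} then yields the claimed bound for every choice of transient duration $d=d(b)$ obeying the stated scaling.

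The main obstacle is item (ii): unlike the single-sensor WD-CuSum of \cite{Zou2017}, our post-change laws $\overline{p}^{(i)}$ are mixtures over $\binom{L}{i}$ node configurations, so one must confirm that the moment and uniform/$r$-quick integrability conditions used in \cite{Zou2017} survive the mixture operation. For the homogeneous-sensor model with well-behaved $f,g$ this is routine, since a finite mixture of densities each absolutely continuous with respect to $g$ and each having finite log-likelihood moments inherits those properties, but it is the one step of the reduction that is not purely a matter of citation.
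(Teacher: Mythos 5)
Your proposal is correct and follows essentially the same route as the paper's proof: invoke Lemma~\ref{thm:lemma_2} to replace $\mathrm{WADD}_d(\tau_W)$ by $\overline{\mathrm{WADD}}_d(\tau_W)$, then recognize $\tau_W$ as the WD-CuSum of \cite{Zou2017} for the transient problem $(g,\overline{p}^{(1)},\overline{p}^{(2)})$ and cite its asymptotic delay upper bound under the scaling $d\sim c' b/I_1$ and the weight condition on $\rho_1$. The paper's proof is exactly this one-line reduction; your added remarks on verifying finiteness/positivity of $I_1,I_2$ and the $r$-quick moment conditions for the mixture densities $\overline{p}^{(i)}$ are a sensible, though implicit in the paper, sanity check rather than a departure.
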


Finally, we establish the asymptotic optimality of our proposed test:

\begin{proof}[Proof of Theorem 1] 
i) follows directly from the MTFA lower bound on the WD-CuSum test \cite{Zou2017}. ii) follows from Theorem 2 and 3 and since $c = c'$ when $b=\log \gamma$.
\end{proof}

\end{document}